\def\cA{{\mathcal A}}   \def\cB{{\mathcal B}}   \def\cC{{\mathcal C}}
\def\cD{{\mathcal D}}
\def\cU{{\mathcal U}}
\def\cV{{\mathcal V}}
\def\cW{{\mathcal W}}
\newcommand{\qhg}[2]{\,\mbox{}_{#1}\phi_{ #2}\!}
\newcommand{\qargu}[4]{\left(\begin{array}{c} #1\\#2\end{array} ; #3, #4\right)}
\theoremstyle{plain}
\newtheorem{thm}{Theorem}[section]
\newtheorem{prop}[thm]{Proposition}
\newtheorem{defi}[thm]{Definition}
\newtheorem{rem}[thm]{Remark}
\numberwithin{equation}{section}
\newcommand{\mqh}{m\mathfrak{h}_q}
\newcommand{\qh}{\mathfrak{h}_q}
\newcommand{\V}{\mathcal{V}_N}
\newcommand{\Q}[1]{[#1]_q}
\newcommand{\Xt}{\widetilde{X}}
\newcommand{\Vt}{\widetilde{V}}
\newcommand{\Zt}{\widetilde{Z}}
\title{\bf Meta algebras and biorthogonal rational functions:\\ the $q$-Hahn case}
\renewcommand*{\Affilfont}{\normalsize\small}
\author[1]{Pierre-Antoine Bernard}
\author[2]{Abderahmane Bouziane}
\author[3]{Samuel Pellerin}
\author[4]{Simone Têtu}
\author[5]{Satoshi Tsujimoto}
\author[6]{Luc Vinet}
\author[7]{Meri Zaimi}
\author[8]{Alexei Zhedanov\vspace{.5em}}
\affil[1,2,3,6]{\textit{Centre de Recherches Math\'ematiques, Universit\'e de Montr\'eal, P.O. Box 6128, Centre-ville Station, Montr\'eal (Qu\'ebec), H3C 3J7, Canada. \vspace{.9em}}}
\affil[4]{\textit{Division of Applied Mathematics, Brown University, Providence RI 02912, United States. \vspace{.9em}}}
\affil[5]{\textit{Graduate School of Informatics, Kyoto University, Yoshida-Honmachi, Kyoto, Japan 606-8501.}
\vspace{.9em}}
\affil[6]{\textit{IVADO, Montr\'eal (Qu\'ebec), H2S 3H1, Canada.} \vspace{.9em}}
\affil[7]{\textit{Perimeter Institute for Theoretical Physics, Waterloo (Ontario), N2L 2Y5, Canada.} \vspace{.9em}}
\affil[8]{\textit{School of Mathematics, Renmin University of China, Beijing, 100872, China.} \vspace{.9em}}
\renewcommand\AB@affilsepx{: \protect\Affilfont}
\affil[ ]{E-mail addresses}
\renewcommand\AB@affilsepx{, \protect\Affilfont}
\affil[1]{pierre-antoine.bernard@umontreal.ca}
\affil[2]{abderahmane.bouziane@umontreal.ca}
\affil[3]{samuel.pellerin@umontreal.ca}
\affil[4]{simone\textunderscore tetu@brown.edu}
\affil[5]{tsujimoto.satoshi.5s@kyoto-u.jp}
\affil[6]{luc.vinet@umontreal.ca}
\affil[7]{mzaimi@perimeterinstitute.ca}
\affil[8]{zhedanov@yahoo.com}
\date{\today}
\begin{document}

\maketitle
\begin{center}
This paper is cordially dedicated to Mourad Ismail on the occasion of his 80th birthday.
\end{center}
\vspace{3mm}
\begin{center}
\textbf{Abstract}\vspace{5mm}\\
\begin{minipage}{15cm}
A unified algebraic interpretation of both finite families of orthogonal polynomials and biorthogonal rational functions of $q$-Hahn type is provided. The approach relies on the meta $q$-Hahn algebra and its finite-dimensional bidiagonal representations. The functions of $q$-Hahn type are identified as overlaps (up to global factors) between bases solving ordinary or generalized eigenvalue problems in the representation of the meta $q$-Hahn algebra. Moreover, (bi)orthogonality relations, recurrence relations, difference equations and some contiguity relations satisfied by these functions are recovered algebraically using the actions of the generators of the meta $q$-Hahn algebra on various bases.
\end{minipage}
\end{center}

\medskip

\begin{center}
\begin{minipage}{15cm}
\textbf{Keywords:} meta $q$-Hahn algebra; bidiagonal representations; generalized eigenvalue problems; $q$-Hahn polynomials; $q$-Hahn rational functions; orthogonality; bispectrality 

\textbf{MSC2020 database:} 	33D80; 33D45

\end{minipage}
\end{center}

\vspace{15mm}
\newpage

\section{Introduction}

This paper provides a new chapter of the broad program whose purpose is to extend the algebraic interpretation of the orthogonal polynomials (OPs) of the Askey scheme to biorthogonal rational functions (BRFs) in a unified manner, using meta algebras.  
Following \cite{TVZ24}, where terminating ${}_3F_2$ hypergeometric series are treated (the Hahn case), the present article focuses on their $q$-analogs, that is, on terminating ${}_3\phi_2$ basic hypergeometric series (the $q$-Hahn case).  

All families of OPs in the Askey scheme \cite{Koekoek} satisfy a recurrence relation and a difference equation  which can be viewed as eigenvalue problems (EVPs) for two operators, with the polynomials as solutions. The two bispectral operators satisfy the Askey--Wilson algebra, originally introduced in \cite{Zhe91}, or one of its limits or specializations. In this algebraic framework, the OPs are interpreted as overlap coefficients between eigenbases of the two generators of the Askey--Wilson algebra in some representation. In the situation of finite-dimensional representations, the two generators form a Leonard pair \cite{Ter01}. 
Such pairs are in correspondence with the terminating branch of the Askey scheme.  

The algebraic picture that is emerging for the BRFs extending the Askey scheme is as follows. The rational functions are solutions to two generalized eigenvalue problems (GEVPs) involving three operators, and the study of the algebraic relations satisfied by these operators has led to the introduction of meta algebras. The name is justified by the fact that these algebras contain those of Askey--Wilson type as well as the algebras associated with the generalized bispectral operators of the rational functions, and can hence provide an algebraic interpretation of both OPs and BRFs. The rational functions and polynomials appear as overlap coefficients when considering bases related to GEVPs or EVPs in representations of the meta algebra. Let us mention that the most general explicitly known ``elliptic" BRFs are related to elliptic quadratic Sklyanin algebras, as was noticed by Rains and Rosengren \cite{Rosen}.

The algebraic interpretation of BRFs started in \cite{TVZ21}, where the Hahn family was studied using a triplet of difference operators $X,Y,Z$, and the rational Hahn algebra was provided. The meta Hahn algebra was then introduced in \cite{VZ_Hahn} as a simpler algebra with three generators $X,V,Z$. The Hahn algebra is recovered from the pair formed by the generator $V$ and the linear pencil $W=X+\mu Z$, while the rational Hahn algebra is recovered from the triplet $X$, $Y=XV$ and $Z$. The bispectral properties and (bi)orthogonality of the overlap functions between GEVP and EVP bases were characterized in \cite{VZ_Hahn} with algebraic methods, but the rational functions of Hahn type and the Hahn polynomials were explicitly obtained only by using a differential or difference realization of the meta algebra. 

In the subsequent paper \cite{TVZ24}, a model-independent approach started being developed. It consists of first obtaining bidiagonal representations for the three generators $X,V,Z$ of the meta algebra. Then, the strategy is to compute the GEVP bases associated with the elements $X,Z$ and the EVP bases associated with the elements $W,V$, in terms of the bidiagonal representation bases. The OPs and BRFs are then identified explicitly as overlap coefficients between different (generalized) eigenbases, and their properties are determined using the actions of the generators in different bases. Section 2 of \cite{TVZ24} provides a more detailed summary of this general framework. This strategy was carried out in \cite{TVZ24} in the case of the OPs and BRFs of Hahn type.

The goal of this paper is to pursue this program for OPs and BRFs of $q$-Hahn type. Here, the starting point is the abstract meta $q$-Hahn algebra, which was first introduced at the end of \cite{BGVZ} as the algebra realized by a triplet of $q$-difference operators used to study the BRFs of $q$-Hahn type. 

The structure of the paper is as follows. In Section \ref{sec:metaqHahn}, the meta $q$-Hahn algebra is defined, and its connection with the $q$-Hahn algebra is explained. In Section \ref{sec:representations}, the finite-dimensional bidiagonal representations of the generators of the meta $q$-Hahn algebra are obtained. Several GEVPs and EVPs of interest are solved in Section \ref{sec:eigenbases} using the bidiagonal representations. In Section \ref{sec:actionbases}, the representations of the meta $q$-Hahn algebra on EVP bases are provided and are seen to be of tridiagonal form. The $q$-Hahn polynomials are explicitly identified in Section \ref{sec:qHahnOP} within the overlap coefficients between two EVP bases. Moreover, the orthogonality relation and bispectral properties of the $q$-Hahn polynomials are simply recovered from the properties of the eigenbases and the representations of the meta $q$-Hahn algebra on these bases. This provides a concise review of the characterization of the $q$-Hahn polynomials and their duals. In Section \ref{sec:qHahnBRF}, the rational functions of $q$-Hahn type are identified within the overlap coefficients between GEVP and EVP bases, and their biorthogonality relation as well as generalized bispectral properties are recovered with algebraic methods using the representations of the meta $q$-Hahn algebra. Both biorthogonal partner families are characterized. 
Section \ref{sec:conclusion} contains concluding remarks. A list of useful actions of elements of the meta $q$-Hahn algebra on several bases is provided in Appendix \ref{sec:appendix}.  

Mourad Ismail's oeuvre on the theory of orthogonal polynomials, special functions and their applications is monumental. He has in particular done pioneering work in the study of biorthogonal rational functions, a field that he keeps advancing as shown by his recent contribution \cite{Ismail} to the Indagationes Mathematicae volume dedicated to Tom Koornwinder. Mourad has also mentored and supported many young researchers throughout his career. The authors of this paper, who comprise a number of students and early career researchers, are hence very happy to praise Mourad Ismail by dedicating this paper to him.

\section{Meta $q$-Hahn algebra} \label{sec:metaqHahn}

The starting point for the algebraic interpretation of both polynomials and rational functions of $q$-Hahn type is the meta $q$-Hahn algebra, defined next.
Throughout this paper, we use the notations
\begin{equation}
[A,B]_q=AB-qBA, \qquad \{A,B\}=AB+BA, \qquad [x]_q=\frac{1-q^x}{1-q}.
\end{equation}
\begin{defi} \label{def:mqh}
The meta $q$-Hahn algebra $\mqh$ is the associative algebra with unit $I$ and generators $X,V,Z$ obeying the defining relations
\begin{align}
    &[Z,X]_q= Z^2+Z-(1-q)X, \label{eq:mqhrel1}\\
    &[X,V]_q = \{V,Z\}+V+\xi I, \label{eq:mqhrel2}\\
    &[V,Z]_q = (1+q)X-(1-q)V+\eta I, \label{eq:mqhrel3}
\end{align}
where $\xi,\eta$ are central parameters.
\end{defi}
It is apparent that in the limit $q\to 1$, one recovers the meta Hahn algebra as defined in \cite{TVZ24} from the meta $q$-Hahn algebra $\mqh$. As a consequence, the results obtained in the present paper are $q$-analogs of those obtained in \cite{TVZ24}, meaning that the latter can usually be recovered from the former in the limit $q \to 1$.

There is a Casimir element for the algebra $\mqh$ given by
\begin{align}
Q &= \left(q\eta -1 \right) X +(1-q) V +  (q\xi-\eta) Z + qX^2 -q(1-q)XV \nonumber \\
&\quad -(1+q)XZ + (2-q)VZ -q(1-q)XVZ + VZ^2.
\end{align}

Let us now discuss the important connection between the meta $q$-Hahn algebra and the $q$-Hahn algebra. The latter is realized by the recurrence and $q$-difference operators of the $q$-Hahn polynomials. With affine transformations of the generators, it can be defined in the following generic form (see also \cite{BVZ} for a slightly different presentation).
\begin{defi} \label{def:qh}
    The $q$-Hahn algebra $\qh$ is the unital associative algebra with generators $K_1,K_2$ and central elements $r_1,r_2,r_3,r_4,r_5,r_6$ obeying the defining relations
    \begin{align}
        &[K_2,[K_1,K_2]_q]_q = r_1(q-1)K_2^2 + r_2\{K_1,K_2\} + r_3 K_2 +r_4 K_1 +r_5 , \label{eq:qhrel1}\\
        &[[K_1,K_2]_q,K_1]_q = r_1(q-1)\{K_1,K_2\} + r_2K_1^2 - r_1^2 K_2 +r_3 K_1 +r_6 . \label{eq:qhrel2}
    \end{align}
\end{defi}
There is an embedding $\qh \xhookrightarrow{} \mqh$ of the $q$-Hahn algebra into the meta $q$-Hahn algebra given by the following mappings for the generators
\begin{align}
    K_1 \mapsto X+\mu Z, \qquad K_2 \mapsto V, \qquad \mu \in \mathbb{R}, 
\end{align}
and the following mappings for the central elements
\begin{align}
&r_1 \mapsto 1 + \mu(1-q), \quad 
r_2 \mapsto [2]_q, \quad 
r_3 \mapsto \xi(1-q)+2\eta+ \mu([2]_q+\eta(1-q)), \quad r_4 \mapsto 0, \\ 
&r_5 \mapsto [2]_q\mu\xi, \quad
r_6 \mapsto q^{-2}(1+\mu(1-q))\left(\eta(1+q^2\mu)-q\xi-[2]_qQ\right).
\end{align}
Note the presence of the Casimir element $Q$ of the meta $q$-Hahn algebra in the image of the central element $r_6$ of the $q$-Hahn algebra. The above embedding implies that in a finite-dimensional representation of the meta $q$-Hahn algebra, the linear pencil $X+\mu Z$ and the element $V$ form a Leonard pair of $q$-Hahn type \cite{Ter01}. This means that there is a basis where $V$ is diagonal and $X+\mu Z$ is irreducible tridiagonal, and there is a basis where $V$ is irreducible tridiagonal and $X+\mu Z$ is diagonal. The change of basis coefficients are expressed in terms of $q$-Hahn polynomials. This feature will prove essential in Section \ref{sec:qHahnOP}, where the $q$-Hahn polynomials are reviewed from the perspective of the meta $q$-Hahn algebra. In the rest of the paper, it will be convenient to define the linear pencil in the form $W=X-[\mu]_qZ$. In order to compare with the results of \cite{TVZ24} when $q\to1$, one must change $\mu \to -\mu$.

\begin{rem}
Originally, the meta $q$-Hahn algebra was introduced in \cite{BGVZ} as the algebra generated by elements $\Xt,\Vt,\Zt$ obeying the relations
\begin{align}
    &[\Xt,\Zt]_q= -\Zt^2-\Zt-(1-q)\Xt, \label{eq:mqhrel12}\\
    &[\Vt,\Xt]_q = -\{\Vt,\Zt\}+\chi_1\Xt-\Vt-q^{-1}\chi_1 \Zt-\chi_0 I, \label{eq:mqhrel22}\\
    &[\Zt,\Vt]_q = -\chi_2\Xt-(1-q)\Vt+\chi_1\Zt+\chi_3 I. \label{eq:mqhrel32}
\end{align}
This algebra is realized by the triplet of bispectral operators associated with the biorthogonal rational functions of $q$-Hahn type, with given values of the parameters $\chi_0,\chi_1,\chi_2,\chi_3$. In order to recover the relations \eqref{eq:mqhrel12}--\eqref{eq:mqhrel32} from Definition \ref{def:mqh}, one must first change the parameter $q\to q^{-1}$ in \eqref{eq:mqhrel1}--\eqref{eq:mqhrel3} and then take
\begin{equation}
\Xt = c_1 X +d_1 Z, \quad \Vt = c_2 V +d_2 I, \quad \Zt=Z, 
\end{equation}
with
\begin{align}
    &c_1=q^{-1}(1+(1-q)d_1), \quad d_1=\frac{\chi_1 - (1-q)d_2}{\chi_2}, \\
    &c_2=\frac{\chi_2}{[2]_q}c_1, \quad d_2= \frac{\chi_2 + (1-q)\chi_1 - \chi_2\sqrt{1 + q^{-1}(1-q^2)\chi_1/\chi_2}}{(1-q)^2}. 
\end{align}
The identity $[A,B]_q=(1-q)\{A,B\}-[B,A]_q$ is useful for obtaining relation \eqref{eq:mqhrel22}. Note that if 
\begin{equation}
\frac{\chi_1}{\chi_2}=-\frac{q[2\omega]_q}{[2]_q} \label{eq:relchi}
\end{equation}
for some $\omega$ (which happens in the realization of the meta $q$-Hahn algebra appearing in \cite{BGVZ}), then the above coefficients simplify to
\begin{align}
    &c_1=q^{\omega-1}, \quad d_1=-[\omega]_q, \quad c_2=\frac{\chi_2}{[2]_q}q^{\omega-1}, \quad d_2= \frac{\chi_2}{[2]_q} \left[\omega+1\right]_q\left[\omega\right]_q. 
\end{align} 
In this paper, we have chosen to work with the relations \eqref{eq:mqhrel1}--\eqref{eq:mqhrel3} instead of the original relations \eqref{eq:mqhrel12}--\eqref{eq:mqhrel32} since there are fewer parameters in the former. Moreover, the change $q\to q^{-1}$ is justified later by the identification of the $q$-Hahn polynomials as overlap coefficients between bases of interest. This change does not affect the embedding of the $q$-Hahn algebra into the meta $q$-Hahn algebra since the following identities hold
\begin{equation}
[[A,B]_q,A]_q=[A,[B,A]_q]_q = q^2 [A,[B,A]_{q^{-1}}]_{q^{-1}}. \label{eq:idqcomm} 
\end{equation}
Indeed, using \eqref{eq:idqcomm}, it is seen that the relations \eqref{eq:qhrel1}--\eqref{eq:qhrel2} with parameter $q^{-1}$ are the same as those with parameter $q$ up to a redefinition of the central elements $r_i$.
\end{rem}

\section{Bidiagonal finite-dimensional representations} \label{sec:representations}

The meta $q$-Hahn algebra admits finite-dimensional representations in which the generators act in a bidiagonal fashion on some basis. More precisely, we will take $X$ and $Z$ to be represented by lower bidiagonal matrices (raising action) and $V$ by an upper bidiagonal matrix (lowering action).

Let $N$ be a positive integer and $\V$ be a vector space of dimension $N+1$. The real scalar product of two vectors $\ket{v}$ and $\ket{w}$ in $\V$ will be denoted by $\braket{v|w}$. Let $\{ \ket{n} \}_{n=0}^N$ be a set of basis vectors of $\V$ such that $\braket{m|n}=\delta_{mn}$ for $m,n=0,1,\dots,N$; this will be referred to as the standard basis.

\begin{prop}
    There is a bidiagonal representation of the meta $q$-Hahn algebra $\mqh$ on the $N+1$-dimensional vector space $\V$ given by the following actions of the generators $Z,X,V$ on the basis vectors $\ket{n}$, for $n=0,1,\dots,N$:
    \begin{align}
    &Z \ket{n} = - \ket{n} + a_n \ket{n+1}, \label{eq:actZn} \\
    &X \ket{n} = -[\alpha-n]_q \ket{n} + a_n [\beta-n]_q \ket{n+1}, \label{eq:actXn} \\
    &V \ket{n} =  [n-\beta]_q  [\beta-n+1]_q  \ket{n} - \frac{q^{\alpha-N} \Q{n}\Q{N+1-n}}{a_{n-1}} \ket{n-1}, \label{eq:actVn}
\end{align}
where $a_n$ are constants related to the normalization of the basis vectors, with $a_{-1}=a_N=0$, and $\alpha,\beta$ are two parameters related as follows to the parameters $\xi$ and $\eta$ of the algebra $\mqh$:  
\begin{align}
    &\xi =  q^{\alpha+1}\Q{-\beta-1}\Q{\beta-N}, \label{eq:n0} \\
    &\eta =   q\Q{\alpha} + \Q{\alpha-N}. \label{eq:eta3}
\end{align}
\end{prop}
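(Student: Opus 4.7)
The plan is to verify directly that the prescribed actions \eqref{eq:actZn}--\eqref{eq:actVn} satisfy each of the three defining relations \eqref{eq:mqhrel1}--\eqref{eq:mqhrel3} of $\mqh$ with $\xi,\eta$ given by \eqref{eq:n0}--\eqref{eq:eta3}. Since these relations are linear in the generators, it suffices to apply both sides to an arbitrary basis vector $\ket{n}$ and equate coefficients. The prescribed boundary choices $a_{-1}=a_N=0$, together with the vanishing factors $\Q{n}$ at $n=0$ and $\Q{N+1-n}$ at $n=N+1$ appearing in the formula for $V\ket{n}$, ensure that the representation is well defined on $\V$ at the boundary indices.

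I would begin with \eqref{eq:mqhrel1}, which is the simplest: both $Z$ and $X$ act as lower bidiagonal (raising) operators, so $ZX\ket{n}$, $XZ\ket{n}$, $Z^2\ket{n}$, and $X\ket{n}$ all lie in the span of $\ket{n},\ket{n+1},\ket{n+2}$. Comparison of the coefficient of $\ket{n+2}$ reduces to the telescoping identity $\Q{\beta-n}-q\Q{\beta-n-1}=1$; the coefficient of $\ket{n}$ reduces to $\Q{\alpha-n}-q\Q{\alpha-n}=(1-q)\Q{\alpha-n}$ matched by $-(1-q)X$; the coefficient of $\ket{n+1}$ yields a combination of $\Q{\alpha-n},\Q{\alpha-n-1},\Q{\beta-n}$ that collapses via the same identities. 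No parameter constraint arises here.

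Next I would treat \eqref{eq:mqhrel3}, which involves the lowering operator $V$ so that both sides produce components on $\ket{n-1},\ket{n},\ket{n+1}$. The $\ket{n-1}$ coefficient matches trivially after pulling out $(1-q)V$. The $\ket{n+1}$ coefficient reduces, after $q$-number expansion, to
\begin{equation}
\Q{n+1-\beta}\Q{\beta-n}-q\Q{n-\beta}\Q{\beta-n+1}=(1+q)\Q{\beta-n},
\end{equation}
matching the contribution of $(1+q)X$. The $\ket{n}$ coefficient yields an equation whose $n$-dependent part vanishes after telescoping $q\Q{n}\Q{N+1-n}-\Q{n+1}\Q{N-n}$, and the residual constant is precisely $\eta=q\Q{\alpha}+\Q{\alpha-N}$, fixing \eqref{eq:eta3}. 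Relation \eqref{eq:mqhrel2} is verified in the same spirit: the off-diagonal coefficients match $\{V,Z\}+V$ through analogous telescopings, and the diagonal equation forces the central parameter $\xi$ to the value $q^{\alpha+1}\Q{-\beta-1}\Q{\beta-N}$ stated in \eqref{eq:n0}.

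The main obstacle is purely computational bookkeeping: each relation generates several $q$-polynomial identities in the shifted $q$-numbers $\Q{\alpha-n},\Q{\beta-n\pm 1},\Q{n},\Q{N-n}$ that must be simplified in parallel, and one must track how the free normalization constants $a_n$ appear symmetrically in ratios so that they drop out of the verification. The only conceptual point is to recognize that the $n$-independent residues of \eqref{eq:mqhrel2} and \eqref{eq:mqhrel3} on the diagonal are what determine the central parameters $\xi,\eta$ in terms of $\alpha,\beta,N$, which yields \eqref{eq:n0}--\eqref{eq:eta3}; conversely, fixing $\xi,\eta$ as above makes all three relations hold identically in $n$.
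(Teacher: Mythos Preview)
Your proposal is correct but follows a different route from the paper. The paper \emph{derives} the representation: it starts with general bidiagonal ans\"atze $Z\ket{n}=c_n\ket{n}+a_n\ket{n+1}$, $X\ket{n}=d_n\ket{n}+b_n\ket{n+1}$, $V\ket{n}=e_n\ket{n}+f_n\ket{n-1}$ with undetermined coefficients, and then uses the relations \eqref{eq:mqhrel1}--\eqref{eq:mqhrel3} to generate two-term recurrences for $b_n/a_n$, $d_n$, $a_{n-1}f_n$ and $e_n$, solving each in turn and introducing the parameters $\alpha,\beta$ along the way. The truncation condition $a_Nf_{N+1}=0$ is what forces \eqref{eq:eta3}, and a final diagonal constraint from \eqref{eq:mqhrel2} fixes \eqref{eq:n0}. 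Your approach instead \emph{verifies} the finished formulas directly by substituting them into each relation and matching the three components. Both are valid proofs of the proposition as stated; your route is shorter and entirely adequate for existence, whereas the paper's construction also explains uniqueness (up to the gauge freedom in $a_n$ and the discarded branch $c_n=(1-q)d_n$) and shows transparently how the parameters $\alpha,\beta$ and the constraints on $\xi,\eta$ emerge.

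One small wording issue: the defining relations are not ``linear in the generators'' (they contain products such as $ZX$ and $\{V,Z\}$); what you mean, and what is correct, is that they are operator identities and hence it suffices to check them on a basis. Also note that in your treatment of \eqref{eq:mqhrel3} the determination of $\eta$ comes from requiring the diagonal identity to hold for \emph{all} $n$; in the paper's derivation the same value of $\eta$ appears instead as the consistency condition for the truncation $a_Nf_{N+1}=0$. These are of course equivalent, but it is worth being aware that the two arguments pin down $\eta$ at different steps.
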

\begin{proof}
The bidiagonal raising actions of $Z$ and $X$ on $\V$ are of the form (for $n=0,1,\dots,N$)
\begin{align}
    &Z\ket{n} = c_n \ket{n} + a_n\ket{n+1},\\ 
    &X\ket{n} = d_n \ket{n} + b_n\ket{n+1},
\end{align}
for some constants $a_n,b_n,c_n,d_n$ to be determined, with $a_{-1}=a_N=0$ and $b_{-1}=b_N=0$ as truncation conditions. Using these actions and the defining relation \eqref{eq:mqhrel1} of $\mqh$, one finds the following constraint on the constants $a_n$ and $b_n$:
\begin{align}
q a_nb_{n+1}- a_{n+1}b_n+ a_na_{n+1}=0.
\end{align}
The solution to this recurrence equation for $b_n/a_n$ is given by
\begin{equation}
b_n=a_n\left([-n]_q+q^{-n}\frac{b_0}{a_0}\right).
\end{equation}
By reparametrizing $b_0/a_0=[\beta]_q$, the solution is written as $b_n=a_n[\beta-n]_q$, with $\beta$ an arbitrary constant and with $a_{-1}=a_N=0$. Relation \eqref{eq:mqhrel1} also leads to the following constraint on the constant $c_n$:
\begin{align}
(1 + c_n) (c_n - (1 - q) d_n) =0. \label{eq:constraintcn}
\end{align}
The previous equation is solved for $c_n=-1$. With this choice, relation \eqref{eq:mqhrel1} finally leads to the following constraint on the constant $d_n$:
\begin{equation}
 qd_{n+1} - d_n - 1 =0.
\end{equation}
Similarly, with the reparametrization $d_0=-[\alpha]_q$, where $\alpha$ is an arbitrary constant, the solution can be written as $d_n=-[\alpha-n]_q$. This proves equations \eqref{eq:actZn} and \eqref{eq:actXn}.

The bidiagonal lowering action of $V$ has the form
\begin{equation}
V \ket{n} = e_n \ket{n} + f_n \ket{n-1},
\end{equation}
where the constant $f_n$ must obey the truncation conditions $f_0=f_{N+1}=0$. Using relation \eqref{eq:mqhrel3} together with the bidiagonal actions of $X,V,Z$, one finds the following constraint on the constants $f_n$ and $a_n$:
\begin{align}
a_nf_{n+1} -qa_{n-1}f_n + (1+q)[\alpha-n]_q-\eta=0.
\end{align}
With the condition $a_{-1}f_0=0$, the previous recurrence relation is solved by
\begin{equation}
a_{n-1}f_n = [n]_q(\eta-[\alpha-n+1]_q-q[\alpha]_q).
\end{equation}
One must impose relation \eqref{eq:eta3} in order for the solution to be consistent with the condition $a_Nf_{N+1}=0$. Therefore, using \eqref{eq:eta3}, the solution can be written as
\begin{equation}
f_n = -\frac{q^{\alpha-N}[n]_q[N+1-n]_q}{a_{n-1}}.
\end{equation}
Note that with this presentation of the action of $V$, the factor $[n]_q/a_{n-1}$ must vanish in the limit $n\to 0$ so that $f_0=0$, and the factor $[N+1-n]_q/a_n$ must vanish in the limit $n\to N$ so that $f_{N+1}=0$.

Relations \eqref{eq:mqhrel2} and \eqref{eq:mqhrel3} lead to two constraints for the constant $e_n$:
\begin{align}
&e_{n+1}-qe_n - (1+q)[\beta-n]_q=0,
&[\beta-n+1]_qe_{n+1}-q[\beta-n-1]_qe_n=0.
\end{align}
The solution is given by $e_n=[n-\beta]_q[\beta-n+1]_q$, which completes the proof of \eqref{eq:actVn}. There is a final constraint imposed by the defining relation \eqref{eq:mqhrel2} of $\mqh$ which reduces to \eqref{eq:n0}.
\end{proof}

\begin{rem}
The solution $c_n=(1-q)d_n$ to equation \eqref{eq:constraintcn} will not be considered in this paper, as it leads (see Proposition \ref{prop:(G)EVPsol}) to a GEVP between $X$ and $Z$ with degenerate eigenvalue $\lambda=(1-q)^{-1}$. In the limit $q\to 1$, this solution becomes $c_n=0$, and the GEVP is trivial unless $\alpha \in \{0,1,\dots,N\}$, in which case the eigenvalue $\lambda$ can take any value.
\end{rem}

The transpose $O^\top$ of an operator $O$ acting on $\V$ is defined as usual by $\braket{v|O^\top|w}= \braket{w|O|v}$, where $\ket{v},\ket{w}$ are any vectors in $\V$. We record for convenience the action of the transposed operators $Z^\top,X^\top,V^\top$ on the basis vectors $\ket{n}$ for $n=0,1,\dots,N$: 
\begin{align}
    &Z^\top \ket{n} = - \ket{n} + a_{n-1} \ket{n-1}, \label{eq:actZTn} \\
    &X^\top \ket{n} = -[\alpha-n]_q \ket{n} + a_{n-1} [\beta-n+1]_q \ket{n-1}, \label{eq:actXTn} \\
    &V^\top \ket{n} =  [n-\beta]_q [\beta-n+1]_q  \ket{n} - \frac{q^{\alpha-N} \Q{n+1}\Q{N-n}}{a_{n}} \ket{n+1}. \label{eq:actVTn}
\end{align}
Note that we use the same notation for the abstract elements of the algebra $\mqh$ and their representation as operators acting on the vector space $\V$. The transpose only makes sense for the operators.

\section{(Generalized) Eigenbases} \label{sec:eigenbases}

Following the general approach described in the introduction (see also \cite{TVZ24}), the next important step for obtaining an algebraic interpretation of the special functions of $q$-Hahn type is to consider the bases which solve certain (generalized) eigenvalue problems within the representation of $\mqh$ on the vector space $\V$. The use of the bidiagonal actions obtained in the previous section is very helpful for obtaining explicitly these bases and the associated (generalized) eigenvalues; this is the advantage of the methods developed previously in \cite{TVZ24} in the Hahn case and extended here to the $q$-Hahn case.   

\begin{prop}\label{prop:(G)EVPsol}
The solutions to the GEVPs and EVPs of interest are given as follows in terms of expansions over the basis $\{\, \ket{\ell}\}_{\ell = 0}^N$ of the representation space $\V$, for $n=0,1,\dots,N$:
\begin{itemize}[leftmargin=*]
\item $X \ket{d_n} = \lambda_n Z \ket{d_n}$
\begin{align}
    \lambda_n &=[\alpha-n]_q, \label{eq:lambda}\\
    \ket{d_n} &= \sum_{\ell=0}^{N} \dfrac{a_n a_{n+1}\dots a_{N-1}}{a_{\ell} a_{\ell+1}\dots a_{N-1}} \dfrac{(q^{n-N};q)_{N-\ell}}{(q^{n-N};q)_{N-n}} \dfrac{(q^{n-N-\alpha+\beta+1}; q)_{N-n}}{(q^{n-N-\alpha+\beta+1}; q)_{N-\ell}} \ket{\ell};
    \label{eq:dn}
\end{align}

\item $X^{\top} \ket{d_n^*} = \lambda_n Z^{\top} \ket{d_n^*}$
\begin{align}
    \lambda_n &=[\alpha-n]_q,\\
    \ket{d_n^*} &= \sum_{\ell=0}^{N} \dfrac{a_0 a_{1}\dots a_{n-1}}{a_{0} a_{1}\dots a_{\ell-1}} \frac{q^{(\beta-\alpha)n}}{q^{(\beta-\alpha)\ell}} \dfrac{(q^{-n};  q)_{\ell}}{(q^{-n};  q)_{n}} \dfrac{(q^{-n+\alpha-\beta};  q)_{n}}{(q^{-n+\alpha-\beta};  q)_{\ell}} \ket{\ell};
    \label{eq:dn*}
\end{align}

\item $V \ket{e_n} = \nu_n \ket{e_n}$
\begin{align}
    \nu_n &= [n-\beta]_q [\beta-n+1]_q,
    \label{mu} \\
    \ket{e_n} &= \sum_{\ell=0}^{N} \dfrac{a_0 a_{1}\dots a_{\ell-1}}{a_{0} a_{1}\dots a_{n-1}} \frac{q^{(\alpha-\beta-1)n}}{q^{(\alpha-\beta-1)\ell}} \dfrac{(q^{-n};  q)_{\ell}}{(q^{-n};  q)_{n}} \dfrac{(q;  q)_{n}}{(q;  q)_{\ell}} \dfrac{(q^{-N};  q)_{n}}{(q^{-N};  q)_{\ell}} \dfrac{(q^{n-2\beta-1};  q)_{\ell}}{(q^{n-2\beta-1};q)_n} \ket{\ell}; 
    \label{eq:en}
\end{align}
    
\item $V^{\top} \ket{e_n^*} = \nu_n e_n^*$
\begin{align}
    \nu_n &= [n-\beta]_q [\beta-n+1]_q, \\
    \ket{e_n^*} &= \sum_{\ell=0}^{N} \dfrac{a_\ell a_{\ell+1}\dots a_{N-1}}{a_{n} a_{n+1}\dots a_{N-1}} \frac{q^{(\alpha+\beta-N)(N-n)}}{q^{(\alpha+\beta-N)(N-\ell)}} \dfrac{(q^{n-N};  q)_{N-\ell}}{(q^{n-N};  q)_{N-n}} \dfrac{(q;  q)_{N-n}}{(q;  q)_{N-\ell}} \dfrac{(q^{-N};  q)_{N-n}}{(q^{-N};  q)_{N-\ell}} \dfrac{(q^{-N-n+2\beta+1};  q)_{N-\ell}}{(q^{-N-n+2\beta+1};  q)_{N-n}} \ket{\ell}; 
    \label{eq:en*}
\end{align}

\item
$ W\ket{f_n} = (X-\Q{\mu} Z) \ket{f_n} = \rho_n \ket{f_n}$ 
\begin{align}
    \rho_n &= -\Q{\alpha-n}+\Q{\mu}, \label{eq:rhon}\\
    \ket{f_n} &= \sum_{\ell=0}^{N} \dfrac{a_n a_{n+1}\dots a_{N-1}}{a_{\ell} a_{\ell+1}\dots a_{N-1}} \frac{q^{(\alpha-\mu-n)(N-\ell)}}{q^{(\alpha-\mu-n)(N-n)}} \dfrac{(q^{n-N};  q)_{N-\ell}}{(q^{n-N};  q)_{N-n}} \dfrac{(q^{\beta-\mu-N+1};  q)_{N-n}}{(q^{\beta-\mu-N+1};  q)_{N-\ell}} \ket{\ell};
    \label{eq:fn}
\end{align}

\item 
$W^{\top}\ket{f_n^*} = (X^{\top}-\Q{\mu} Z^{\top}) \ket{f_n^*} = \rho_n \ket{f_n^*}$
\begin{align}
    \rho_n &= -\Q{\alpha-n}+\Q{\mu}, \label{eq:rho}\\
    \ket{f_n^*} &= \sum_{\ell=0}^{N} \dfrac{a_0 a_{1}\dots a_{n-1}}{a_{0} a_{1}\dots a_{\ell-1}} \frac{q^{(\beta-\alpha)n}}{q^{(\beta-\alpha)\ell}} \dfrac{(q^{-n};  q)_{\ell}}{(q^{-n};  q)_{n}} \dfrac{(q^{\mu-\beta};  q)_{n}}{(q^{\mu-\beta};  q)_{\ell}} \ket{\ell};
    \label{eq:fn*}
\end{align}
\end{itemize} 
where the $q$-Pochhammer symbol is defined as usual by
\begin{equation}
(a;q)_0:= 1, \qquad (a;q)_k:=\prod_{i = 0}^{k-1}(1-aq^i), \quad k=1,2,3,\dots   
\end{equation}
\end{prop}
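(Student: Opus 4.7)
The six items all have the same structure, so I would treat them by a single template and then specialize. For each case, write the unknown (generalized) eigenvector as an expansion
\[
\ket{\psi_n}=\sum_{\ell=0}^{N}c_{\ell}\ket{\ell}
\]
over the standard basis, and apply the relevant operator or linear pencil using the bidiagonal actions \eqref{eq:actZn}--\eqref{eq:actVn} (or their transposes \eqref{eq:actZTn}--\eqref{eq:actVTn}). Since each of $X,Z,V,W,X^\top,Z^\top,V^\top,W^\top$ is two-diagonal on $\{\ket{\ell}\}$, the (G)EVP equation produces, after collecting the coefficient of each $\ket{k}$, a two-term recurrence relating $c_k$ and $c_{k\pm 1}$.

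Next, I would read off the eigenvalue from a boundary consistency condition. For instance, for $X\ket{d_n}=\lambda Z\ket{d_n}$, the pencil $X-\lambda Z$ is lower bidiagonal with diagonal entry $-[\alpha-\ell]_q+\lambda$ on $\ket{\ell}$; requiring a solution with $c_{n-1}=0$ but $c_n\neq 0$ forces $\lambda=\lambda_n=[\alpha-n]_q$, as stated in \eqref{eq:lambda}. The eigenvalues \eqref{mu} and \eqref{eq:rhon} are obtained in exactly the same way from the diagonal coefficients of $V$ and of $W=X-[\mu]_qZ$, while the transposed problems inherit the same spectra because transposing does not change eigenvalues. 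The natural support of each eigenvector is dictated by whether the operator raises or lowers: $\ket{d_n}$ and $\ket{f_n}$ are supported on $\ell\geq n$ (the raising operators $X,Z,W$ build them upward starting from $\ket{n}$), whereas $\ket{e_n}$, $\ket{d_n^*}$, $\ket{f_n^*}$ are supported on $\ell\leq n$ (the lowering operators $V,X^\top,Z^\top,W^\top$ build them downward), and $\ket{e_n^*}$ is supported on $\ell\geq n$. One then checks in each case that the recursion automatically terminates at the other end because the relevant coefficient $a_{-1}$, $a_N$, $[0]_q$, or $[N+1-n]_q/a_{\,\cdot}$ vanishes; this is exactly where the quantization of the spectrum comes from.

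Then I would solve the recursion by iteration. Each ratio $c_\ell/c_{\ell\mp 1}$ is a product of an $a$-factor with a ratio of two $q$-linear forms in $q^\ell$. Rewriting $[\alpha-n]_q-[\beta-\ell+1]_q$ and $[\alpha-n]_q-[\alpha-\ell]_q$ using the identity $[x]_q-[y]_q=q^y[x-y]_q$ turns each ratio into a form $q^{\text{shift}}(1-q^{a+i})/(1-q^{b+i})$, and telescoping over $i$ assembles precisely the $q$-Pochhammer symbols appearing in \eqref{eq:dn}, \eqref{eq:dn*}, \eqref{eq:en}, \eqref{eq:en*}, \eqref{eq:fn}, and \eqref{eq:fn*}. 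The same identity applied to $[k-\beta]_q[\beta-k+1]_q-[n-\beta]_q[\beta-n+1]_q$ produces the two $q$-Pochhammer pieces $(q^{-N};q)_{\cdot}$ and $(q^{n-2\beta-1};q)_{\cdot}$ that distinguish \eqref{eq:en} from \eqref{eq:dn}. Finally, I would fix the normalization by choosing $c_n=1$ (respectively $c_n=1$ at the appropriate end of the support), which matches the fact that the prefactor of $\ket{n}$ in each listed expression evaluates to unity.

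The only real obstacle is the bookkeeping: identifying the correct factorization of $[\alpha-n]_q-[\beta-\ell+1]_q$ and of $\nu_k-\nu_n$, and packaging the iterated product with the right powers of $q$ and the correct shifted $q$-Pochhammer arguments. The transposed cases require no new idea once one observes that $O^\top$ has the bidiagonal actions \eqref{eq:actZTn}--\eqref{eq:actVTn}, which are obtained from those of $O$ by exchanging the roles of $\ket{\ell}$ and $\ket{\ell\pm 1}$; the same recursion-and-telescoping argument then yields \eqref{eq:dn*}, \eqref{eq:en*}, \eqref{eq:fn*}. The algebraic relation \eqref{eq:n0} plays no role here, as the eigenvector formulas depend only on $\alpha,\beta$ and on the normalization constants $a_\ell$.
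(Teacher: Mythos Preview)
Your proposal is correct and follows essentially the same approach as the paper: both reduce each (G)EVP to a two-term recurrence for the expansion coefficients $c_\ell=\braket{\ell|\psi_n}$ via the bidiagonal actions, read off the eigenvalues from the diagonal entries, solve the recursion by telescoping into $q$-Pochhammer symbols, and fix the normalization $\braket{n|\psi_n}=1$. The only cosmetic difference is that the paper obtains the recurrence by acting with the transposed operator on the bra $\bra{\ell}$ (or $\bra{N-\ell}$) rather than by applying the operator to the ket expansion, but this yields the identical two-term relation.
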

\begin{proof}
The generalized eigenvalues $\lambda_n$ and ordinary eigenvalues $\nu _n$ and $\rho _n$ are all easily determined from the diagonal terms of the bidiagonal actions \eqref{eq:actZn}--\eqref{eq:actVn} of the generators $Z,X,V$ of $\mqh$.

Consider the EVP for $V$ in the $N+1$-dimensional representation space $\V$, with eigenvalues $\nu_{n} =[n-\beta]_q[\beta-n+1]_q $ and associated eigenvectors $\ket{e_n}$, for $n=0,1,\dots,N$. For a fixed $n$, the eigenvector $\ket{e_n}$ can be expanded on the orthonormal basis $\{\ket{\ell}\}_{\ell=0}^N$ as 
\begin{equation}
    \ket{e_n}=\sum_{\ell=0}^N \braket{\ell \mid e_n} \ket{\ell}.
\end{equation}
From the eigenvalue equation $V\ket{e_n} = \nu_n \ket{e_n}$, one finds
\begin{equation}
  \braket{\ell \mid(V-\nu_n)\mid e_n}=0, \quad  (\ell = 0, 1, \ldots, N).
\end{equation}
Using the action \eqref{eq:actVTn} of $V^{\top}$ on $\bra{\ell}$ in the previous equation, one gets 
\begin{align}
  (\nu_\ell-\nu_n)\braket{\ell \mid e_n} - \frac{q^{\alpha-N} \Q{\ell+1}\Q{N-\ell}}{a_{\ell}}\braket{\ell+1\mid e_n}=0, \quad (\ell = 0, 1, \ldots, N).
  \label{eq:ebasecond}
\end{align}
Equation \eqref{eq:ebasecond} is a two-term recurrence relation for the coefficients $\braket{\ell \mid e_n}$ that can easily be solved up to a normalization constant. It will be convenient to choose the normalization condition $\braket{n\mid e_n}=1$. One thus arrives at the explicit expression \eqref{eq:en} for $\ket{e_n}$.

Consider now the GEVP for $X$ and $Z$, with generalized eigenvalues $\lambda_n = \Q{\alpha-n}$ and associated generalized eigenvectors $\ket{d_n}$, for $n=0,1,\dots,N$. From the generalized eigenvalue equation $X\ket{d_n} = \lambda_n Z \ket{d_n}$, one finds
\begin{align}
  \braket{N-\ell\mid(X-\lambda_n Z)\mid d_n}=0, \quad (\ell = 0, 1, \ldots, N),
\end{align}
which, with the help of \eqref{eq:actZTn} and \eqref{eq:actXTn}, leads to
\begin{equation}
  (\lambda_{N-\ell}-\lambda_{n})\braket{N-\ell \mid d_n} + (\lambda_n-\Q{\beta-N+\ell+1})a_{N-\ell-1}\braket{N-\ell-1 \mid d_n}=0, \quad (\ell = 0, 1, \ldots, N). 
  \label{eq:dbasecond}
\end{equation}
Again, this is a two-term recurrence relation for the coefficients $\braket{N-\ell \mid d_n}$. The solution to equation \eqref{eq:dbasecond} under the normalization condition $\braket{n \mid d_n}=1$ leads to the expression \eqref{eq:dn} for $\ket{d_n}$.    

For the remaining (generalized) eigenvectors, let us simply indicate that the expressions provided in \eqref{eq:en*}, \eqref{eq:dn*}, \eqref{eq:fn} and \eqref{eq:fn*} are obtained similarly from the following equations (for $\ell =0,1,\ldots,N$): 
\begin{align}
&\braket{N-\ell \mid (V^{\top}-\nu_n) \mid e_n^*} =0, & &\hspace{-2cm}\braket{\ell \mid(X^{\top}-\lambda_nZ^{\top}) \mid d_n^*} =0, \\
&\braket{N-\ell \mid (W-\rho_n) \mid f_n} =0, & &\hspace{-2cm}\braket{\ell \mid(W^{\top}-\rho_n) \mid f_n^*} =0,
\end{align}
and the normalization conditions $\braket{n \mid e_n^*}=\braket{n \mid d_n^*}=\braket{n \mid f_n}=\braket{n \mid f_n^*}=1$.
\end{proof}

Let us point out that the sums in the expressions \eqref{eq:dn*}, \eqref{eq:en} and \eqref{eq:fn*} for the vectors $\ket{d_n^*}$, $\ket{e_n}$ and $\ket{f_n^*}$ can, in fact, end at $\ell=n$ as the subsequent terms vanish, while the sums in the expressions \eqref{eq:dn}, \eqref{eq:en*} and \eqref{eq:fn} for the vectors $\ket{d_n}$, $\ket{e_n^*}$ and $\ket{f_n^*}$ can start at $\ell=n$ as the preceding terms vanish. This is similar to the Hahn case, see Section 5 of \cite{TVZ24}.

From the expression \eqref{eq:dn} for $\ket{d_n}$ and the action \eqref{eq:actZn} of $Z$ on the standard basis, it is straightforward to compute
\begin{equation}
       Z \ket{d_n} = 
       - \sum_{\ell=0}^{N} \dfrac{a_n a_{n+1}\dots a_{N-1}}{a_{\ell} a_{\ell+1}\dots a_{N-1}}\frac{(q^{n-N};q)_{N-\ell}(q^{n-N-\alpha+\beta+2};q)_{N-n}}{(q^{n-N};q)_{N-n}(q^{n-N-\alpha+\beta+2};q)_{N-\ell}} q^{n-\ell}\ket{\ell}. \label{Zdn}
\end{equation}
Again, the previous sum can start at $\ell=n$.

The following orthogonality relations hold for $m,n=0,1,\dots,N$:
\begin{gather}
\braket{e_m^\star \mid e_n} = \braket{e_n \mid e_m^\star}  = \delta_{mn}, \quad \braket{f_m^\star \mid f_n} = \braket{f_n \mid f_m^\star} = \delta_{mn}, \label{eq:orthef} \\
\braket{d_m^\star \mid Z \mid d_n} = \braket{d_n \mid Z^\top \mid d_m^\star} = -\delta_{mn}. \label{eq:orthd}
\end{gather}
The cases $m\neq n$ generically follow from the definition of the vectors as solutions to (generalized) eigenvalue problems with distinct (generalized) eigenvalues (see also Section 2 in \cite{TVZ24}), and the cases $m=n$ are a consequence of the normalization choices made in Proposition \ref{prop:(G)EVPsol}. From these previous orthogonality relations, one deduces that the following operators are all equal to the identity operator on $\V$:
\begin{gather}
\sum_{n=0}^N \ket{e_n}\bra{e_n^*}, \quad \sum_{n=0}^N \ket{e_n^*}\bra{e_n}, \quad \sum_{n=0}^N \ket{f_n}\bra{f_n^*}, \quad \sum_{n=0}^N \ket{f_n^*}\bra{f_n}, \label{eq:idef} \\
-\sum_{n=0}^N Z\ket{d_n}\bra{d_n^*}, \quad -\sum_{n=0}^N \ket{d_n^*}\bra{d_n}Z^\top. \label{eq:idd}
\end{gather}

\section{Representations of $\mqh$ in various bases} \label{sec:actionbases}

In this section, the actions of the generators of $\mqh$ on some of the bases obtained in the previous section are provided (see also Appendix \ref{sec:appendix} for additional formulas). These will prove useful for characterizing the recurrence and difference properties of the special functions of $q$-Hahn type. For an operator $O$ acting on the vector space $\V$, the notation $O_{m,n}^{(b)}$ will refer to its $m,n$ matrix entry in the basis $b \in \{d, d^*, e, e^*, f, f^*\}$, see Proposition \ref{prop:(G)EVPsol}. Note moreover that for $b \in \{e,f\}$, the resolutions of the identity \eqref{eq:idef} imply that
\begin{equation}
O^{\top (b^*)}_{m,n} = O_{n,m}^{(b)}. \label{eq:transposedmatrixentries}
\end{equation}

\subsection{Representation in the $e$ and $e^{*}$ bases.}\label{ssec:repe} 

The $e$ basis is formed of the eigenvectors $\ket{e_n}$ solving the eigenvalue equation $V\ket{e_n}=\nu_n\ket{e_n}$.
Using the explicit expression \eqref{eq:en} for the eigenvectors $\ket{e_n}$ as well as the actions \eqref{eq:actZn} and \eqref{eq:actXn} of $Z$ and $X$ on the standard basis, one finds  
\begin{subequations} \label{eq:actZe}
\begin{align}
    Z \ket{e_n} &= Z^{(e)}_{n+1,n} \ket{e_{n+1}} +Z^{(e)}_{n,n} \ket{e_{n}} + Z^{(e)}_{n-1,n} \ket{e_{n-1}},
\label{action:Zone}
\end{align}
where
\begin{align}
&Z^{(e)}_{n+1,n}=a_n, \label{action:Zone:coe1}\\
&Z^{(e)}_{n,n}=-1-\frac{q^{\alpha+\beta-2n}(q [n]_q [2\beta-n-N]_q+[n-N]_q [2\beta+2-n]_q)}{[2\beta-2n]_q [2\beta-2n+2]_q}, \label{action:Zone:coe2}  \\
&Z^{(e)}_{n-1,n}=q^{2\alpha-2\beta-1}\frac{\Q{n}\Q{n-N-1}\Q{2\beta-n+2}\Q{2\beta-n-N+1}}{a_{n-1}(\Q{2n-2\beta-2})^2\Q{2\beta-2n+1}\Q{2\beta-2n+3}},\label{action:Zone:coe3}
\end{align}
\end{subequations}
and
\begin{subequations} \label{eq:actXe}
\begin{align}
    X \ket{e_n} &=X^{(e)}_{n+1,n} \ket{e_{n+1}} + X^{(e)}_{n,n} \ket{e_{n}} + X^{(e)}_{n-1,n} \ket{e_{n-1}}, \label{action:Xone}
\end{align}
where
\begin{align}
&X^{(e)}_{n+1,n}=\Q{\beta-n}a_n,
\label{action:Xone:coe1}
\\ &X^{(e)}_{n,n}=\frac{-1}{1-q}\left(1-q^{\alpha-n}\frac{(1+q^{\beta-N})(1+q^{\beta+1})}{(1+q^{\beta-n})(1+q^{\beta-n+1})}\right),
\label{action:Xone:coe2}
\\
    &X^{(e)}_{n-1,n}= \Q{n-\beta-1}Z^{(e)}_{n-1,n}.
\label{action:Xone:coe3}
\end{align}
\end{subequations}
Therefore, the operator $V$ is diagonal in the basis $e$, while the operators $Z,X$ (and, consequently, the operator $W=X-[\mu]_qZ$) are tridiagonal. One can use the above results and relation \eqref{eq:transposedmatrixentries} to directly deduce the tridiagonal actions of the transposed operators $Z^\top$ and $X^\top$ on the $e^*$ basis, in which $V^\top$ is diagonal by definition. The representation of $\mqh$ on the $e$ and $e^*$ bases will play a role in the characterization of both OPs and BRFs of $q$-Hahn type.

\subsection{Representation in the $f$ and $f^{*}$ bases.} \label{ssec:repf} 
The $f$ basis is formed of the eigenvectors $\ket{f_n}$ solving the eigenvalue equation $W\ket{f_n}=\rho_n\ket{f_n}$ for the linear pencil $W=X-[\mu]_qZ$. Here, using the expression \eqref{eq:fn} for the eigenvectors $\ket{f_n}$ and the action \eqref{eq:actVn} of $V$ on the standard basis, one computes
\begin{subequations}\label{eq:actVf}
\begin{align}
    V \ket{f_n} &= V^{(f)}_{n+1,n} \ket{f_{n+1}} + V^{(f)}_{n,n} \ket{f_{n}} + V^{(f)}_{n-1,n} \ket{f_{n-1}}, \label{action:Vonf} 
\end{align}
where
\begin{align}
  &V^{(f)}_{n+1,n}=a_n q^{n-\alpha+1}\Q{n - \beta + \mu}\Q{n- N +\beta + \mu + 1}, \label{vf+1}\\
&V^{(f)}_{n,n}=
[n]_q[n+\mu-N]_q 
+q[n+\beta-N]_q[n+\mu-\beta]_q 
+q^{n}[-\beta]_q[\mu+1]_q, \label{vf}\\
&V^{(f)}_{n-1,n}=q^{\alpha-n+1}\dfrac{\Q{n}\Q{n - N - 1}}{a_{n-1}}.  \label{vf-1}
\end{align}
\end{subequations}
Hence, $W$ is diagonal in the basis $f$, while $V$ is tridiagonal. Again, one can use the formulas above and relation \eqref{eq:transposedmatrixentries} to obtain the tridiagonal action of the transposed operator $V^\top$ on the basis $f^*$ which diagonalizes $W^\top$.

The results of Subsections \ref{ssec:repe} and \ref{ssec:repf} confirm that the operators $V$ and $W$ form a Leonard pair, as expected from the embedding of the $q$-Hahn algebra in the meta $q$-Hahn algebra discussed in Section~\ref{sec:metaqHahn}.

\section{$q$-Hahn and dual $q$-Hahn polynomials} \label{sec:qHahnOP}

It is known that the $q$-Hahn and dual $q$-Hahn polynomials can be interpreted as overlap functions within representations of the $q$-Hahn algebra. Their orthogonality and bispectral properties can be found in \cite{Koekoek}. Here, in the same spirit as in the previous work \cite{TVZ24} treating the Hahn case, we explain how the (dual) $q$-Hahn polynomials can be recovered and characterized quite easily using the bidiagonal representation of the meta $q$-Hahn algebra, in an approach that unifies the study of both the OPs and the BRFs.     

With the usual notations for the basic hypergeometric functions ${}_r\phi_s$, the $q$-Hahn and dual $q$-Hahn polynomials are respectively defined as \cite{Koekoek} 
\begin{align}
    Q_m (q^{-x}; \hat{\alpha}, \hat{\beta}, N;q) = \qhg{3}{2}\qargu{q^{-m},\hat{\alpha} \hat{\beta} q^{m+1},q^{-x}}{\hat{\alpha} q,q^{-N}}{q}{q}, \quad m=0, \dots , N, \label{HP} \\
    R_m (\mu(x); \hat{\alpha}, \hat{\beta}, N;q) = \qhg{3}{2}\qargu{q^{-m},q^{-x}, \hat{\alpha} \hat{\beta} q^{x+1}}{\hat{\alpha} q,q^{-N}}{q}{q}, \quad m=0, \dots , N, \label{dHP}
\end{align}
where $\mu(x)=q^{-x}+\hat{\alpha} \hat{\beta} q^{x+1}$. These polynomials are related by 
\begin{equation}
R_m (\mu(n); \hat{\alpha}, \hat{\beta}, N;q)=Q_n (q^{-m}; \hat{\alpha}, \hat{\beta}, N;q), \quad m,n=0,\dots,N. \label{eq:HahndualHahn}
\end{equation}
The following notation for the $q$-Pochhammer symbol will be used:
\begin{equation}
(a_1,a_2,\dots,a_k;q)_n :=(a_1;q)_n(a_2;q)_n\dots(a_k;q)_n. 
\end{equation}

\subsection{Identification}
We start by identifying precisely the $q$-Hahn polynomials within the overlap coefficients between EVP bases in the representation of the meta $q$-Hahn algebra $\mqh$.
\begin{prop}
\label{propid}
The functions $S_m(n) := \braket{e_m \mid f_n^*}$ and $\Tilde{S}_m(n) := \braket{e_m^* \mid f_n}$ are both expressible in terms of $q$-Hahn polynomials as follows, for $m,n=0,1,\dots,N$:
\begin{align}
 S_m(n)&=
    \dfrac{a_{0}a_{1} \dots a_{n-1}}{a_0 a_1 \dots a_{m-1}}q^{m(\alpha-\beta-N+m-1)+n(\beta-\alpha+\frac{n}{2}+\frac{1}{2})}
   \dfrac{(q;q)_N (-1)^n}{(q;q)_n (q;q)_{N-m}}\dfrac{(\hat{\alpha} q;q)_{n}}{(\hat{\alpha} \hat{\beta} q^{m+1};q)_{m}} Q_m (q^{-n}; \hat{\alpha}, \hat{\beta}, N;q),\label{Shahn}\\
\tilde S_m(n)&=
    \frac{a_{n}a_{n+1} \dots a_{N-1}}{a_{m}a_{m+1}\dots a_{N-1}} \hat{\alpha}^{N-m-n} q^{m(\beta-\alpha+N-m)+n(\alpha-\beta-\frac{n}{2}-\frac{3}{2})+N} \nonumber \\
    & \quad \times \dfrac{(q;q)_N (-1)^n (\hat{\alpha} q;q)_m }{(q;q)_m (q;q)_{N-n} (\hat{\beta}q;q)_m} \dfrac{(\hat{\beta}q;q)_{N-n}}{(\hat{\alpha} \hat{\beta} q^{2m+2};q)_{N-m}} Q_m (q^{-n}; \hat{\alpha}, \hat{\beta}, N;q), \label{sthahn}
\end{align}
with
\begin{equation}
    \hat{\alpha} = q^{\mu-\beta-1},  \qquad \hat{\beta} = q^{-\mu-\beta-1}.\label{newpar}
\end{equation}
\end{prop}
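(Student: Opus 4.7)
The strategy is direct computation: for both $S_m(n)$ and $\tilde S_m(n)$, I would expand the two relevant vectors in the standard basis using Proposition~\ref{prop:(G)EVPsol}, writing $\braket{e_m \mid f_n^*}=\sum_{\ell=0}^N \braket{e_m \mid \ell}\braket{\ell \mid f_n^*}$ (and similarly for $\tilde S_m$), then recognize the remaining $\ell$-sum as a terminating ${}_3\phi_2$ and match it with the $q$-Hahn polynomial $Q_m$ via the parameter identification \eqref{newpar}.

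For $S_m(n)$, substituting \eqref{eq:en} (with $n\to m$) and \eqref{eq:fn*} produces a sum over $\ell$ in which the free normalization constants cancel telescopically, $\frac{a_0\cdots a_{\ell-1}}{a_0\cdots a_{m-1}}\cdot\frac{a_0\cdots a_{n-1}}{a_0\cdots a_{\ell-1}}=\frac{a_0\cdots a_{n-1}}{a_0\cdots a_{m-1}}$, and in which the $q$-exponent factors $q^{-(\alpha-\beta-1)\ell}\cdot q^{-(\beta-\alpha)\ell}$ collapse to a residual $q^\ell$ in the summand. Pulling out the $\ell$-independent $q$-Pochhammer prefactors leaves the series
\begin{equation*}
\sum_{\ell}\frac{(q^{-m};q)_\ell (q^{-n};q)_\ell (q^{m-2\beta-1};q)_\ell}{(q;q)_\ell (q^{-N};q)_\ell (q^{\mu-\beta};q)_\ell}\, q^\ell = \qhg{3}{2}\qargu{q^{-m},q^{-n},q^{m-2\beta-1}}{q^{-N},q^{\mu-\beta}}{q}{q}.
\end{equation*}
Under \eqref{newpar}, $q^{m-2\beta-1}=\hat{\alpha}\hat{\beta} q^{m+1}$ and $q^{\mu-\beta}=\hat{\alpha} q$, so this is exactly $Q_m(q^{-n};\hat{\alpha},\hat{\beta},N;q)$ as defined in \eqref{HP}. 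Rewriting the remaining $\ell$-independent prefactor using the standard identities $(q^{-m};q)_m=(-1)^m q^{-m(m+1)/2}(q;q)_m$ and $(q^{-N};q)_m=(-1)^m q^{-Nm+m(m-1)/2}(q;q)_N/(q;q)_{N-m}$ then produces \eqref{Shahn}.

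For $\tilde S_m(n)$, the analogous substitution of \eqref{eq:en*} and \eqref{eq:fn} yields an $a$-factor ratio $\frac{a_n\cdots a_{N-1}}{a_m\cdots a_{N-1}}$; the $q$-Pochhammer factors now carry subscripts $N-\ell$ and the sum is supported on $\ell\geq\max(m,n)$. Performing the change of summation variable $k=N-\ell$ converts it into the terminating series $\qhg{3}{2}\qargu{q^{m-N},q^{n-N},q^{2\beta-N-m+1}}{q^{-N},q^{\beta-\mu-N+1}}{q}{q^{N-\beta-\mu-n}}$. In contrast to the $S_m(n)$ case the argument is not $q$; however it matches exactly $CDq^{N-n}/(AB)$ with $A,B,C,D$ the four listed parameters, so an appropriate Sears-type three-term transformation for terminating ${}_3\phi_2$ series applies and converts the series into ${}_3\phi_2(q^{-m},q^{-n},q^{m-2\beta-1};q^{-N},q^{\mu-\beta};q,q)=Q_m(q^{-n};\hat{\alpha},\hat{\beta},N;q)$. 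Collecting all the induced prefactors produces \eqref{sthahn}; the factor $\hat{\alpha}^{N-m-n}$ there originates from this Sears transformation.

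The main obstacle is essentially bookkeeping: tracking the numerous powers of $q$, signs, and ratios of $q$-Pochhammer symbols that appear when separating the $\ell$-independent prefactor from the sum, and --- for $\tilde S_m(n)$ only --- selecting the correct terminating ${}_3\phi_2$ transformation needed to handle the non-unit argument. Conceptually the proof is immediate once one notices that the normalization choices $\braket{n\mid e_n}=\braket{n\mid f_n^*}=1$, etc., made in Proposition~\ref{prop:(G)EVPsol} are precisely those that eliminate the $a_\ell$-dependence from the overlaps, leaving a pure basic-hypergeometric series.
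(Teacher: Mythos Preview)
Your approach matches the paper's exactly for $S_m(n)$ and in outline for $\tilde S_m(n)$. One clarification on the latter: a \emph{single} Sears transformation does not take the ${}_3\phi_2$ with argument $q^{N-\beta-\mu-n}$ directly to $Q_m$; the paper first applies Sears' formula (Gasper--Rahman (3.2.5)) once to reduce the argument to $q$, landing on ${}_3\phi_2(q^{m-N},q^{n-N},\hat\alpha\hat\beta q^{m+1};\,q^{-N},\hat\alpha q^{m+n-N+1};\,q,q)$, and then applies the transformation (Gasper--Rahman (3.2.3)) \emph{twice} to reach the standard $Q_m$ parameters --- so expect two more bookkeeping steps than your sketch indicates, and the prefactor $\hat\alpha^{N-m-n}$ actually emerges from the combination of all three moves (the intermediate steps produce a factor $(\hat\alpha\hat\beta q^{m+1})^{N-m-n}$, with the $\hat\beta$ part absorbed in the final simplification).
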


\begin{proof}
Using the expressions \eqref{eq:en} and \eqref{eq:fn*} for the eigenvectors $\ket{e_n}$ and $\ket{f_n^*}$ together with the orthonormality relation $\braket{\ell \mid k} = \delta _{\ell k}$ of the standard basis, one finds 
\begin{align}
     S_m(n)=\braket{e_m \mid f_n^*} &=   \dfrac{a_{0}a_{1} \dots a_{n-1}}{a_0 a_1 \dots a_{m-1}}
     q^{m(\alpha-\beta-1)+n(\beta-\alpha)}\frac{(q^{\mu-\beta};q)_n (q, q^{-N};q)_m}{(q^{-n};q)_n (q^{-m},q^{m-2\beta-1};q)_m} \nonumber \\
    &\quad \times \sum_{\ell=0}^{N} \frac{(q^{-n},q^{-m},q^{m-2\beta-1};q)_{\ell} }{(q^{-N},q^{\mu-\beta};q)_{\ell}}\frac{q^\ell}{(q;q)_\ell}. \label{eq:emfsn}
\end{align}
Comparing the sum in \eqref{eq:emfsn} to the $q$-Hahn polynomials defined in \eqref{HP}, one straightforwardly arrives at the formula \eqref{Shahn} for the function $S_m(n)$ with the identification \eqref{newpar} for the parameters $\hat{\alpha}, \hat{\beta}$.

By using similarly the expressions \eqref{eq:en*} and \eqref{eq:fn} for the eigenvectors $\ket{e^*_n}$ and $\ket{f_n}$, and after a change of the summation index, one finds
 \begin{align}
    \tilde S_m(n)=\braket{e_m^* \mid f_n} &=
    \frac{a_{n}a_{n+1} \dots a_{N-1}}{a_{m}a_{m+1}\dots a_{N-1}} q^{(\alpha+\beta-N)(N-m)}q^{(n+\mu-\alpha)(N-n)} \nonumber\\
    &\quad \times \dfrac{(q^{-N+\beta-\mu+1};q)_{N-n} (q^{-N},q;q)_{N-m}}{(q^{n-N};q)_{N-n} (q^{m-N},q^{-N-m+2\beta+1};q)_{N-m}} \nonumber\\
   &\quad \times 
    \sum_{k=0}^{N}  \frac{(q^{m-N},q^{n-N},q^{-N-m+2\beta+1};q)_{k}}{(q^{-N},q^{-N+\beta-\mu+1};q)_k} \frac{q^{(N-\beta-\mu-n)k}}{(q;q)_k}.   \label{ST1}
 \end{align}
Here, the identification \eqref{newpar} can be used to rewrite the result \eqref{ST1} as
 \begin{align}
    \tilde S_m(n) &= \frac{a_{n}a_{n+1} \dots a_{N-1}}{a_{m}a_{m+1}\dots a_{N-1}} 
     \hat{\alpha}^{m-n}\hat{\beta}^{m-N}q^{(\alpha-\beta-N-2)(N-m)+(\beta-\alpha+n+1)(N-n)} \nonumber\\
    & \quad \times \dfrac{(\hat{\alpha}^{-1}q^{-N};q)_{N-n} (q^{-N},q;q)_{N-m}}{(q^{n-N};q)_{N-n} (q^{m-N},\hat{\alpha}^{-1}\hat{\beta}^{-1}q^{-N-m-1};q)_{N-m}} \nonumber\\
   &\quad \times 
   \qhg{3}{2} \qargu{q^{m-N}, q^{n-N}, \hat{\alpha}^{-1}\hat{\beta}^{-1}q^{-N-m-1}}{q^{-N}, \hat{\alpha}^{-1}q^{-N}}{q}{\hat{\beta}q^{N-n+1}}. \label{ST1_2}
 \end{align}
With the help of Sears' formula (equation (3.2.5) in \cite{GR}) 
\begin{equation}
     \qhg{3}{2}\qargu{q^{-n}, a, b}{d, e}{q}{\frac{deq^n}{ab}} = \frac{(e/a;q)_n}{(e;q)_n} \qhg{3}{2}\qargu{q^{-n}, a, d/b}{d, aq^{1-n}/e}{q}{q}, \quad n=0,1,2,\dots, \label{TF1}
\end{equation}
one can transform the $q$-hypergeometric sum in \eqref{ST1_2} as 
\begin{align}
    &\qhg{3}{2} \qargu{q^{m-N}, q^{n-N}, \hat{\alpha}^{-1}\hat{\beta}^{-1}q^{-N-m-1}}{q^{-N}, \hat{\alpha}^{-1}q^{-N}}{q}{\hat{\beta}q^{N-n+1}} \nonumber \\
    &\quad =\frac{(\hat{\alpha}^{-1}q^{-m};q)_{N-n}}{(\hat{\alpha}^{-1}q^{-N};q)_{N-n}} \qhg{3}{2} \qargu{q^{m-N}, q^{n-N}, \hat{\alpha}\hat{\beta}q^{m+1}}{q^{-N}, \hat{\alpha}q^{m+n-N+1}}{q}{q}. \label{eq:transf1}
\end{align}
Then, with a double application of the transformation formula (equation (3.2.3) in \cite{GR})
\begin{equation}
      \qhg{3}{2}\qargu{q^{-n}, b, c}{d, e}{q}{q} = \frac{(de/bc;q)_n}{(e;q)_n} \left(\frac{bc}{d}\right)^n \qhg{3}{2}\qargu{q^{-n}, d/b, d/c}{d, de/bc}{q}{q}, \quad n=0,1,2,\dots,  \label{TF2}
\end{equation}
one can rewrite the $q$-hypergeometric function in the RHS of \eqref{eq:transf1} as
\begin{align}
    \qhg{3}{2} \qargu{q^{m-N}, q^{n-N}, \hat{\alpha}\hat{\beta}q^{m+1}}{q^{-N}, \hat{\alpha}q^{m+n-N+1}}{q}{q} 
    &=(\hat{\alpha}\hat{\beta}q^{m+1})^{N-m-n} \dfrac{(\hat{\beta}^{-1}q^{n-m-N};q)_{N-n} (\hat{\alpha}q;q)_m}{(\hat{\alpha}q^{m+n-N+1};q)_{N-n} (\hat{\beta}^{-1}q^{n-m-N};q)_{m}} \nonumber\\
    &\quad \times \qhg{3}{2} \qargu{q^{-m}, q^{-n}, \hat{\alpha}\hat{\beta}q^{m+1}}{q^{-N}, \hat{\alpha}q}{q}{q}. \label{eq:transf2}
\end{align}
The $q$-hypergeometric function in the RHS of \eqref{eq:transf2} can now easily be written in terms of the $q$-Hahn polynomials defined in \eqref{HP}.
Combining the results \eqref{ST1_2},  \eqref{eq:transf1} and \eqref{eq:transf2}, one obtains
\begin{align}
    \tilde S_m(n) &= \frac{a_{n}a_{n+1} \dots a_{N-1}}{a_{m}a_{m+1}\dots a_{N-1}} \hat{\alpha}^{N-2n}\hat{\beta}^{-n} 
     q^{(\alpha-\beta-N-2)(N-m)+(\beta-\alpha+n+1)(N-n)+(m+1)(N-m-n)} \nonumber\\
    & \times  \dfrac{(q^{-N},q;q)_{N-m} (\hat{\alpha}^{-1}q^{-m},\hat{\beta}^{-1}q^{-N+n-m};q)_{N-n} (\hat{\alpha}q;q)_m}{(q^{m-N},\hat{\alpha}^{-1}\hat{\beta}^{-1}q^{-N-m-1};q)_{N-m}(q^{n-N},\hat{\alpha}q^{m+n-N+1};q)_{N-n} (\hat{\beta}^{-1}q^{n-m-N};q)_{m}} Q_m (q^{-n}; \hat{\alpha}, \hat{\beta}, N;q). \label{ST1_3}
 \end{align}
 In order to arrive at the formula \eqref{sthahn}, it only remains to simplify the coefficients in front of the $q$-Hahn polynomials in the previous equation. Without showing the details of the computations, let us indicate that we used the identities
 \begin{equation}
   (a;q)_k = (-a)^k q^{\frac{k(k-1)}{2}} (q^{1-k}a^{-1};q)_k, \label{identity1}
\end{equation}
\begin{equation}
    (q^{-\ell};q)_k = (-1)^kq^{\frac{k(k-1)}{2}-\ell k} \frac{(q;q)_{\ell}}{(q;q)_{\ell-k}}, \label{identity2}
\end{equation}
as well as the relation
\begin{equation}
    \frac{(\hat{\beta}q^{m+1};q)_{N-n}}{(\hat{\beta}q^{N-n+1};q)_{m}}=
    \frac{(\hat{\beta}q;q)_{N-n}}{(\hat{\beta}q;q)_{m}}.
\end{equation}
\end{proof}

\subsection{Orthogonality relations}
From the orthogonality relations \eqref{eq:orthef} of the vectors solving the EVPs and the resolutions of the identity \eqref{eq:idef}, two orthogonality relations for the overlaps $S_m(n)$ and $\Tilde{S}_m(n)$ directly follow. We now show that these correspond to the orthogonality relations of the $q$-Hahn and dual $q$-Hahn polynomials. 

The first orthogonality relation reads
\begin{equation}
    \sum_{n=0}^N \Tilde{S}_m(n)S_{m'}(n) = \delta_{mm'}, \quad m,m'=0,1,\dots,N. \label{eq:orthS1}
\end{equation}
Substituting in \eqref{eq:orthS1} the expressions for the overlap functions according to equations \eqref{Shahn} and \eqref{sthahn}, and then using the identity
\begin{equation}
(a;q)_{k-\ell}=\frac{(a;q)_k}{(q^{1-k}/a;q)_\ell}\left(-\frac{q}{a}\right)^\ell q^{\frac{1}{2}\ell(\ell-1)-k\ell} \label{eq:idorth}
\end{equation}
as well as the relation
\begin{equation}
    (\hat{\alpha} \hat{\beta}q^{2m+2};q)_{N-m} (\hat{\alpha} \hat{\beta} q^{m+1};q)_m = \frac{(\hat{\alpha}\hat{\beta} q^2;q)_N (\hat{\alpha}\hat{\beta} q^{N+2};q)_m}{(\hat{\alpha}\hat{\beta} q;q)_m} \frac{(1-\hat{\alpha}\hat{\beta}q)}{(1-\hat{\alpha}\hat{\beta}q^{2m+1})}, \label{simplif}
\end{equation}
one arrives at
\begin{align}
    &\sum_{n=0}^N \frac{(\hat{\alpha}q,q^{-N};q)_n }{(q,\hat{\beta}^{-1}q^{-N};q)_n }(\hat{\alpha} \hat{\beta} q)^{-n} Q_m (q^{-n};\hat{\alpha},\hat{\beta},N) Q_{m'} (q^{-n};\hat{\alpha},\hat{\beta},N)\nonumber\\
    & \qquad = \frac{(\hat{\alpha} \hat{\beta} q^2;q)_N}{(\hat{\beta}q;q)_N (\hat{\alpha}q)^N} \frac{(q,\hat{\alpha} \hat{\beta} q^{N+2},\hat{\beta}q;q)_m }{(\hat{\alpha}q,\hat{\alpha} \hat{\beta}q,q^{-N};q)_m } \frac{(1-\hat{\alpha} \hat{\beta} q)(-\hat{\alpha}q)^m}{(1-\hat{\alpha} \hat{\beta} q^{2m+1})}q^{\frac{1}{2}m(m-1)-Nm} \delta_{m{m'}}. \label{eq:orthHahn}
\end{align}
Equation \eqref{eq:orthHahn} corresponds to the orthogonality relation of the $q$-Hahn polynomials given in \cite{Koekoek}.

The second orthogonality relation for the overlap functions is
\begin{equation}
     \sum_{m=0}^N \Tilde{S}_m(n)S_{m}({n'}) = \delta_{n{n'}}. \label{eq:orthS2}
\end{equation}
Proceeding similarly as above, again with the help of \eqref{eq:idorth} and \eqref{simplif}, one can rewrite equation \eqref{eq:orthS2} as
\begin{align}
    &\sum_{m=0}^N \frac{(\hat{\alpha}q,\hat{\alpha} \hat{\beta}q,q^{-N};q)_m}{(q,\hat{\alpha} \hat{\beta} q^{N+2},\hat{\beta}q;q)_m } \frac{(1-\hat{\alpha} \hat{\beta} q^{2m+1})}{(1-\hat{\alpha} \hat{\beta} q)(-\hat{\alpha}q)^m} q^{Nm- \frac{1}{2}m(m-1)} Q_m (q^{-n};\hat{\alpha},\hat{\beta},N;q) Q_{m} (q^{-n'};\hat{\alpha},\hat{\beta},N;q)\nonumber\\
    &\qquad = \frac{(\hat{\alpha} \hat{\beta} q^2;q)_N}{(\hat{\beta}q;q)_N}(\hat{\alpha}q)^{-N} \frac{(q,\hat{\beta}^{-1} q^{-N};q)_n }{(\hat{\alpha}q,q^{-N};q)_n }(\hat{\alpha} \hat{\beta} q)^{n} \delta_{n{n'}}. \label{eq:orthdHahn}
\end{align}
Using the duality relation \eqref{eq:HahndualHahn}, equation \eqref{eq:orthdHahn} is seen to correspond to the orthogonality relation of the dual $q$-Hahn polynomials given in \cite{Koekoek}.

\subsection{Bispectral properties}

We now show explicitly how the bispectral properties of the $q$-Hahn polynomials, that is, the recurrence relation and difference equation, follow from the actions of the operators $V$, $W=X-\Q{\mu}Z$ and their transposes on the EVP bases. The recurrence relation (resp. difference equation) of the dual $q$-Hahn polynomials can be obtained straightforwardly from the difference equation (resp. recurrence relation) of the $q$-Hahn polynomials by using the duality relation \eqref{eq:HahndualHahn}. Let us mention that the derivation of the bispectral properties of the OPs of $q$-Hahn type is rather standard from the perspective of Leonard pairs, but we record it here within the framework of the meta $q$-Hahn algebra and its representation theory.

\subsubsection{Recurrence relation} 
The recurrence relation for $S_m(n)$ is found by recalling, on the one hand, the diagonal action of the transposed operator $W^\top$ on its eigenbasis $f^\star$, and on the other hand, the tridiagonal action of the operator $W$ on the eigenbasis $e$ of $V$, which follows from the tridiagonal actions of $Z$ and $X$ given in \eqref{eq:actZe} and \eqref{eq:actXe}: 
\begin{align}
    \braket{e_m \mid (W^{\top} \mid f_n^*}) &=(\braket{e_m \mid W) \mid f_n^*} \\
    \rho_n \braket{e_m \mid f_n^*}&=W_{m+1,m}^{(e)}\braket{e_{m+1} \mid f_n^*}+W_{m,m}^{(e)}\braket{e_{m} \mid f_n^*}+W_{m-1,m}^{(e)}\braket{e_{m-1} \mid f_n^*} \\
    \rho_n S_m(n)&=W_{m+1,m}^{(e)}S_{m+1}(n)+W_{m,m}^{(e)}S_{m}(n)+W_{m-1,m}^{(e)}S_{m-1}(n), \label{eq:recS}
\end{align}
where the definition $S_m(n):=\braket{e_m \mid f_n^*}$ is used in the last line.
Substituting in \eqref{eq:recS} the expression \eqref{eq:rhon} for the eigenvalues $\rho_n$ as well as the expression \eqref{Shahn} for the overlaps $S_m(n)$, and using the notation $p_m(n):=Q_m (q^{-n}; \hat{\alpha}, \hat{\beta}, N;q)$, one finds
\begin{align}
     (-\Q{\alpha-n}+\Q{\mu}) p_m(n) =
&q^{\alpha-\beta-N+2m} \frac{(1-q^{N-m})(1-\hat{\alpha} \hat{\beta} q^{m+1})}{a_m (1-\hat{\alpha} \hat{\beta} q^{2m+1})(1-\hat{\alpha} \hat{\beta} q^{2m+2})} W_{m+1,m}^{(e)} p_{m+1}(n) \nonumber \\
+&W_{m,m}^{(e)} p_m(n)\nonumber \\
+&q^{\beta-\alpha+N-2m+2}\frac{a_{m-1}(1-\hat{\alpha} \hat{\beta} q^{2m-1})(1- \hat{\alpha} \hat{\beta} q^{2m})}{(1-q^{N-m+1})(1-\hat{\alpha} \hat{\beta} q^{m})} W_{m-1,m}^{(e)} p_{m-1}(n).   \label{recurQ} 
\end{align}
The matrix entries $W_{i,j}^{(e)}$ are computed from \eqref{action:Zone:coe1}--\eqref{action:Zone:coe3} and \eqref{action:Xone:coe1}--\eqref{action:Xone:coe3}, and can be written as:
\begin{align}
&W_{m+1,m}^{(e)}= q^{\mu} \Q{\beta-m-\mu}a_m,\\
&W_{m,m}^{(e)}=\frac{q^{\alpha-m}\Q{m}\Q{2 \beta -m-N+1} \Q{\beta +\mu -m+1} }{\Q{2 \beta -2 m+1}\Q{2 \beta -2 m+2}} \nonumber \\
&\qquad \qquad - \frac{q^{\alpha-m}\Q{m-N} \Q{2 \beta -m+1} \Q{m+\mu-\beta}}{\Q{2m-2 \beta}\Q{2 \beta -2m+1}}+q^{\alpha } \Q{\mu -\alpha}, \\
&W_{m-1,m}^{(e)}=
q^{2 \alpha-2 \beta+\mu-1}\frac{\Q{m} \Q{m-N-1} \Q{2\beta-m+2} \Q{2 \beta-m-N+1} \Q{m-\beta-\mu-1} }{a_{m-1}(\Q{2m-2 \beta-2})^2\Q{2\beta-2m+1}\Q{2 \beta -2m+3}}.
\end{align}
Substituting these expressions in \eqref{recurQ}, multiplying both sides of the equation by $(1-q)q^{-\alpha}$, then subtracting $(1-q^{\mu-\alpha})p_m(n)$ on both sides, and finally writing everything in terms of the parameters $\hat{\alpha}$ and $\hat{\beta}$ given in \eqref{newpar}, one recovers the recurrence relation of the $q$-Hahn polynomials presented in the standard form \cite{Koekoek} (for $m,n=0,1,\dots,N$)
\begin{equation}
    -(1-q^{-n})p_m(n)=A_m p_{m+1}(n)-(A_m + C_m)p_m(n)+C_mp_{m-1}(n),
\end{equation}
where
\begin{align}
    &A_m=\frac{(1-\hat{\alpha} \hat{\beta} q^{m+1})(1-\hat{\alpha} q^{m+1})(1-q^{m-N})}{(1-\hat{\alpha} \hat{\beta} q^{2m+1})(1- \hat{\alpha} \hat{\beta} q^{2m+2})},\\
    &C_m=-\hat{\alpha} q^{m-N} \frac{(1-q^m) (1-\hat{\alpha} \hat{\beta} q^{m+N+1})(1-\hat{\beta} q^m)}{(1-\hat{\alpha} \hat{\beta} q^{2m})(1-\hat{\alpha} \hat{\beta} q^{2m+1})}.
\end{align}


\subsubsection{Difference equation} 
The difference equation for $S_m(n)$ is found by recalling that the operator $V$ is diagonal in its eigenbasis $e$ while the transpose $V^\top$ is tridiagonal in the eigenbasis $f^*$ of $W^\top$: 
\begin{align}
    (\braket{e_m \mid V) \mid f_n^*} &=\braket{e_m \mid (V^\top \mid f_n^*}) \\
    \nu_m \braket{e_m \mid f_n^*}&=V_{n+1,n}^{\top(f^*)}\braket{e_{m} \mid f_{n+1}^*}+V_{n,n}^{\top(f^*)}\braket{e_{m} \mid f_n^*}+V_{n-1,n}^{\top(f^*)}\braket{e_{m} \mid f_{n-1}^*} \\
    \nu_m S_m(n)&=V^{(f)}_{n,n+1} S_m(n+1) +V^{(f)}_{n,n} S_m(n)+V^{(f)}_{n,n-1} S_m(n-1). \label{eq:diffS}
\end{align}
Substituting the expression \eqref{Shahn} for $S_m(n)$ in \eqref{eq:diffS} yields
\begin{align}
\nu_m p_m(n) =& 
- q^{\beta-\alpha+n+1}\frac{a_n(1-\hat{\alpha} q^{n+1})}{(1-q^{n+1})} V^{(f)}_{n,n+1} p_m(n+1) \nonumber \\ 
&+V^{(f)}_{n,n} p_m(n) \nonumber \\
&-q^{\alpha-\beta-n} \frac{(1-q^n)}{a_{n-1} (1-\hat{\alpha} q^{n})} V^{(f)}_{n,n-1} p_m(n-1).\label{difQ}
\end{align}
Using \eqref{vf-1} and \eqref{vf+1} for the matrix elements $V^{(f)}_{n,n+1}$ and $V^{(f)}_{n,n-1}$, as well as the reparametrization given in \eqref{newpar}, one can rewrite \eqref{difQ} as 
\begin{align}
-\nu_m q^{-\beta-1} (1-q)^2 \ p_m(n) =& 
(1-\hat{\alpha}q^{n+1}) (1-q^{n-N}) p_m(n+1) \nonumber \\ 
&- q^{-\beta-1} (1-q)^2 V^{(f)}_{n,n} \ p_m(n) \nonumber \\
&+\hat{\alpha}q(1-q^n)(\hat{\beta}-q^{n-N-1}) p_m(n-1).\label{difQ2}
\end{align}
Finally, using $-\nu_m q^{-\beta-1} (1-q)^2=q^{-m}(1-q^m)(1-\hat{\alpha} \hat{\beta} q^{m+1})+q^{-2\beta-1} (1-q^\beta)(1-q^{\beta+1})$ and equation \eqref{vf} for the matrix element $V^{(f)}_{n,n}$, one recovers from \eqref{difQ2} the difference equation of the $q$-Hahn polynomials in the standard form \cite{Koekoek} (for $m,n=0,1,\dots,N$)
\begin{equation}
    q^{-m}(1-q^m)(1-\hat{\alpha} \hat{\beta} q^{m+1}) p_m(n) = B(n) p_m(n+1) - \big[B(n) + D(n)\big] p_m(n) + D(n) p_m(n-1),
\end{equation}
where
\begin{align}
    & B(n) = (1-\hat{\alpha}q^{n+1}) (1-q^{n-N}),\\
    & D(n) = \hat{\alpha}q(1-q^n)(\hat{\beta}-q^{n-N-1}).
\end{align}

Let us remark that the recurrence relation and difference equation of the $q$-Hahn polynomials could have been obtained in a similar fashion starting from the functions $\Tilde{S}_m(n)$ instead.  

\section{$q$-Hahn rational functions} \label{sec:qHahnBRF}
This section focuses on the rational functions in the variable $q^{-x}$:
\begin{align}
&\cU_m(x;a,b,N;q)
=q^{m(N-m-b)}\frac{(q^{-N};q)_m}{(q^{-m-b};q)_m}
\qhg{3}{2}\qargu{q^{-m},q^{-x},q^{b+m-N}}{q^{-N},q^{a-x}}{q}{q}, \label{cU}
\\
&\cV_m(x;a,b,N;q)
=\cU_m(N-x;b-a+2,b,N;q^{-1}).\label{cV}
\end{align}
These are called biorthogonal rational functions of $q$-Hahn type since they are biorthogonal partners and can be expressed in terms of $q$-hypergeometric series of type ${}_3\phi_2$, as for the $q$-Hahn polynomials. Their biorthogonality and generalized bispectral properties have been studied previously in \cite{BGVZ} with the help of a triplet of $q$-difference operators from which the meta $q$-Hahn algebra was introduced. Let us point out that the expressions given in \eqref{cU} and \eqref{cV} differ from those given in \cite{BGVZ} by the transformation $q \to q^{-1}$.   
The normalization is such that
\begin{equation}
\lim_{x\to\infty} \cU_m(x;a,b,N;q) = 1 \quad \text{when }|q|> 1,
\end{equation}
as can be verified with the $q$-Chu-Vandermonde summation formula (see equation (II.6) in \cite{GR}). In the limit $q\to 1$, one recovers the biorthogonal rational functions of Hahn type studied in \cite{TVZ21,VZ_Hahn,TVZ24}.

Here, the functions $\cU_m(x):=\cU_m(x;a,b,N;q)$ and $\cV_m(x):=\cV_m(x;a,b,N;q)$ will be interpreted and characterized within the bidiagonal representation of the meta $q$-Hahn algebra $\mqh$.

\subsection{Representation theoretic interpretation}
In the case of the BRFs of $q$-Hahn type, the relevant scalar products to consider are $U_m(n):=\braket{e_m \mid d_n^*}$ and $\tilde U_m(n):= \braket{e_m^* \mid Z \mid d_n}$, that is, overlap functions between EVP and GEVP bases in the representation of $\mqh$. This is made explicit next. 
\begin{prop}
The functions $U_m(n)$ and $\tilde U_m(n)$ are respectively given as follows in terms of the rational $q$-Hahn functions $\cU_m(x;a,b,N;q)$ and $\cV_m(x;a,b,N;q)$, for $m,n=0,1,\dots,N$:
\begin{align}
  U_m(n)&=  
\frac{a_0\dots a_{n-1}}{a_0\dots a_{m-1}}q^{(a+m-N-1)m}
\frac{(q^{1-a};q)_n(q^{1+b};q)_m}{(q;q)_n(q^{m+b-N};q)_m}\,\cU_m(n;a,b,N;q),  \label{Ucu}\\
\tilde U_m(n) &= 
-\frac{a_n\dots a_{N-1}}{a_m\dots a_{N-1}}q^{(a-b-m-1)(N-m)+(b-a+1)(N-n)}\frac{(q^{m+1};q)_{N-m}(q^{a-b-1};q)_{N-n}}{(q;q)_{N-n}(q^{-N};q)_m(q^{-b};q)_{N-2m}}\,\cV_m(n;a,b,N;q), \label{UTcv}
\end{align}
where
\begin{equation}
    a=\alpha-\beta, \qquad b=N-2\beta-1. \label{parab}
\end{equation}
\end{prop}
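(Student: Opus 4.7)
The plan is to proceed in direct analogy with the proof of Proposition \ref{propid}, computing each overlap as a single sum by exploiting the orthonormality of the standard basis, and then recognizing the resulting $_3\phi_2$ series as the BRFs defined in \eqref{cU}--\eqref{cV}.

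For $U_m(n)=\braket{e_m\mid d_n^*}$, I would substitute the expansions \eqref{eq:en} for $\ket{e_m}$ and \eqref{eq:dn*} for $\ket{d_n^*}$ and use $\braket{\ell\mid k}=\delta_{\ell k}$. This collapses the double sum to a single sum over $\ell$, with three $q$-Pochhammer symbols in the numerator ($(q^{-m};q)_\ell$, $(q^{-n};q)_\ell$, and $(q^{m-2\beta-1};q)_\ell$) and two in the denominator ($(q^{-N};q)_\ell$ and $(q^{-n+\alpha-\beta};q)_\ell$), multiplied by $q^\ell/(q;q)_\ell$. Under the reparametrization \eqref{parab}, $q^{m-2\beta-1}=q^{m+b-N}$ and $q^{-n+\alpha-\beta}=q^{a-n}$, so the sum is exactly a $_3\phi_2$ with the arguments of $\cU_m(n;a,b,N;q)$ as given in \eqref{cU}. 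Collecting the prefactors and simplifying using identities of the form \eqref{identity1}--\eqref{identity2} (which convert between $(q^{-m-b};q)_m$ and $(q^{1+b};q)_m$ etc.) yields the claimed formula \eqref{Ucu}.

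For $\tilde U_m(n)=\braket{e_m^*\mid Z\mid d_n}$, I would first use \eqref{Zdn} to write $Z\ket{d_n}$ in the standard basis, then pair it with the expansion \eqref{eq:en*} of $\bra{e_m^*}$, obtaining again a single sum over the summation index. The resulting $_3\phi_2$ will have arguments naturally written with $q$, whereas the target $\cV_m$ is defined via $q\to q^{-1}$ in \eqref{cV}. To bridge this, the plan is to apply identities of the form $(a;q)_k=(-a)^k q^{k(k-1)/2}(a^{-1}q^{1-k};q)_k$ and/or Sears' transformation \eqref{TF1} and \eqref{TF2} — exactly as was done in \eqref{eq:transf1}--\eqref{eq:transf2} for $\tilde S_m(n)$ — to rewrite the sum so that it matches $\cV_m(n;a,b,N;q)$ with the parameter identification \eqref{parab}. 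The overall minus sign in \eqref{UTcv} comes from the diagonal contribution $-1$ in the action of $Z$ dominating after simplification of the prefactor.

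The main obstacle, as in the proof of \eqref{sthahn}, is the second computation: the $\cV_m$ convention at base $q^{-1}$ forces a nontrivial sequence of $_3\phi_2$ transformations rather than direct pattern-matching, and one must carefully track the ratios of $q$-Pochhammer symbols and powers of $q$ that are generated in each application of Sears' formula. A minor technical point is that the sums in \eqref{eq:en}, \eqref{eq:dn*}, \eqref{eq:en*} and the expression \eqref{Zdn} effectively terminate (as noted after Proposition \ref{prop:(G)EVPsol}), so all $_3\phi_2$ manipulations are purely formal finite-sum identities and no convergence issue arises. Once the hypergeometric transformations are carried out, matching coefficients against \eqref{Ucu}--\eqref{UTcv} is a bookkeeping step analogous to the one concluding the proof of Proposition \ref{propid}.
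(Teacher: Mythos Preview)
Your plan is correct and essentially coincides with the paper's proof: compute each overlap as a single $\ell$-sum from the explicit eigenvector expansions, recognize $U_m(n)$ directly as the ${}_3\phi_2$ defining $\cU_m$, and for $\tilde U_m(n)$ transform the resulting ${}_3\phi_2$ into the one defining $\cV_m$. The only cosmetic difference is that the paper reaches the target form for $\tilde U_m(n)$ by applying Sears' formula \eqref{TF1} \emph{twice} (rather than the TF1/TF2 combination used for $\tilde S_m(n)$), but either sequence of standard ${}_3\phi_2$ transformations gets you there.
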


\begin{proof}
From the expressions \eqref{eq:en} and \eqref{eq:dn*} of the vectors $\ket{e_n}$ and $\ket{d_n^*}$, it is straightforward to compute
\begin{align}
U_m(n)=\braket{e_m \mid d_n^*}&=   \dfrac{a_{0}a_1 \dots a_{n-1}}{a_0 a_1 \dots a_{m-1}}\dfrac{q^{(\alpha-\beta-1)m} q^{(\beta-\alpha)n} (q^{-n+\alpha-\beta};q)_n (q,q^{-N};q)_m}{(q^{-n};q)_n(q^{-m},q^{m-2\beta-1};q)_m} \nonumber \\ 
&\quad \times \sum_{\ell=0}^{N}
    \frac{(q^{-n},q^{-m}, q^{m-2\beta-1};q)_{\ell}}{(q^{-N},q^{-n+\alpha-\beta};q)_{\ell} } \frac{q^\ell}{(q;q)_\ell}. \label{eq:emdn*}
\end{align}
Comparing the sum on the RHS of \eqref{eq:emdn*} with the definition \eqref{cU} of the function $\cU_m(x)$, and performing some simplifications with the help of the identity \eqref{identity1}, one arrives at the formula \eqref{Ucu} with the identification \eqref{parab} for the parameters $a,b$.

Consider now the function $\Tilde{U}_m(n)=\braket{e_m^* \mid Z \mid d_n}$. From the expressions \eqref{eq:en*} and \eqref{Zdn} for the vectors $\ket{e_n^*}$ and $Z\ket{d_n}$, and after a change of the summation index, one finds
\begin{align}
 \tilde U_m(n) 
 &= -
    \dfrac{a_{n}a_{n+1}\dots a_{N-1}}{a_{m}a_{m+1}\dots a_{N-1}} q^{(\alpha+\beta-N)(N-m)+n-N} 
     \frac{(q^{n-N-\alpha+\beta+2};q)_{N-n}(q,q^{-N};q)_{N-m}}{(q^{n-N};q)_{N-n}(q^{m-N},q^{-N-m+2\beta+1};q)_{N-m}}
 \nonumber\\
     &\quad \times \sum_{k=0}^{N}
     \frac{(q^{m-N},q^{n-N},q^{-N-m+2\beta+1};q)_{k}}{(q^{-N},q^{n-N-\alpha+\beta+2};q)_{k} }\frac{(q^{N-\alpha-\beta+1})^{k}}{(q;q)_{k}}. \label{eq:Ut1}
\end{align}
With the parameters $a,b$ given in \eqref{parab}, equation \eqref{eq:Ut1} can be rewritten as
\begin{align}
 \tilde U_m(n) &= -
 \dfrac{a_{n}a_{n+1}\dots a_{N-1}}{a_{m}a_{m+1}\dots a_{N-1}} q^{(a-b-1)(N-m)+n-N} 
     \frac{(q^{n-N-a+2};q)_{N-n}(q,q^{-N};q)_{N-m}}{(q^{n-N};q)_{N-n}(q^{m-N},q^{-m-b};q)_{N-m}}
 \nonumber\\
     &\quad \times \qhg{3}{2}\qargu{q^{m-N}, q^{n-N}, q^{-m-b}}{q^{-N}, q^{n-N-a+2}}{q}{q^{b-a+2}}. \label{eq:Ut2}
\end{align}
Applying \eqref{TF1} twice to \eqref{eq:Ut2} yields
\begin{align}
 \tilde U_m(n) &= -
 \dfrac{a_{n}a_{n+1}\dots a_{N-1}}{a_{m}a_{m+1}\dots a_{N-1}} q^{(a-b-1)(N-m)+n-N} 
     \frac{(q,q^{-N};q)_{N-m}}{(q^{m-N},q^{-m-b};q)_{N-m}}
 \nonumber\\
     &\quad \times \frac{(q^{m+n-N+b-a+2};q)_{N-n}(q^{n-N+b-a+2};q)_{m}}{(q^{n-N};q)_{N-n}(q^{b-a+2};q)_{m}}\qhg{3}{2}\qargu{q^{-m}, q^{n-N}, q^{m+b-N}}{q^{-N}, q^{n-N+b-a+2}}{q}{q^{2-a}}.
\end{align}
Here, one can use identity \eqref{identity1} as well as
\begin{align}
 &(aq^{-\ell};q)_k=\frac{(a;q)_k(qa^{-1};q)_\ell}{(q^{1-k}a^{-1};q)_\ell}q^{-\ell k},\\
 &(a;q)_{\ell+k}=(a;q)_\ell(aq^\ell;q)_k,
 \end{align}
to show that
\begin{align}
    \tilde U_m(n) &= 
-\frac{a_na_{n+1}\dots a_{N-1}}{a_ma_{m+1}\dots a_{N-1}}q^{(a-b-m-1)(N-m)+(b-a+1)(N-n)}\frac{(q^{m+1};q)_{N-m}(q^{a-b-1};q)_{N-n}}{(q;q)_{N-n}(q^{-m-b};q)_m(q^{-b};q)_{N-2m}} \nonumber\\
&\quad \times \qhg{3}{2}\qargu{q^{-m}, q^{n-N}, q^{m+b-N}}{q^{-N}, q^{n-N+b-a+2}}{q}{q^{2-a}}. \label{eq:Ut3}
\end{align}
Finally, from the definitions \eqref{cU} and \eqref{cV}, and the identity
\begin{equation}
(a;q^{-1})_k=(a^{-1};q)_k(-a)^kq^{-\frac{k(k-1)}{2}},
\end{equation}
one can replace 
\begin{equation}
    \qhg{3}{2}\qargu{q^{-m}, q^{n-N}, q^{m+b-N}}{q^{-N}, q^{n-N+b-a+2}}{q}{q^{2-a}} = \frac{(q^{-m-b};q)_m}{(q^{-N};q)_m}\cV_m(n, a, b, N;q) \label{VV}
\end{equation}
in \eqref{eq:Ut3}, which leads to the result \eqref{UTcv}.

\end{proof}

\subsection{Biorthogonality}
It was already mentioned that the rational functions of $q$-Hahn type are biorthogonal. In the following proposition, the biorthogonality relations are provided and proved from the perspective of the meta $q$-Hahn algebra.  
\begin{prop}
The rational functions of $q$-Hahn type $\cU_m(n;a,b,N;q)$ and $\cV_m(n;a,b,N;q)$ satisfy the following biorthogonality relations:
\begin{align}
&\sum_{n=0}^N  
\cW(n)\,\cV_m(n;a,b,N;q)\,
\cU_{m'}(n;a,b,N;q) = h_m \delta_{mm'}, \quad m,m'=0,1,\dots,N, \label{1storth}\\
&\sum_{m=0}^N  
\cW^*(m)\,\cV_m(n;a, b,N;q)
\,\cU_m(n';a, b,N;q) = h_n^*\delta_{nn'}, \quad n,n'=0,1,\dots,N, \label{2ndorth}   
\end{align}
where
\begin{align}
& h_m = q^{(b+1)m} \frac{(q,q^{-N},\,q^{m+b-N};q)_m (q^{2m+b-N+1};q)_{N-2m}}{(q^{b+1};q)_m (q^{b-N+1};q)_N},
\\
& \cW(n) = q^{(a-b-1)n} \frac{(q^{a-b-1};q)_{N-n} (q^{1-a};q)_{n}}{(q^{-b};q)_{N}} \frac{(q;q)_N}{(q;q)_{N-n}(q;q)_{n}},\\
&h_n^*=q^{-n} \frac{(q,q^{2-a+b-N};q)_n}{(q^{-N},q^{1-a};q)_n},\\
& \cW^*(m) = q^{(a-1)N-(b+1)m}
\frac{(q^{2-a+b-N};q)_N(q^{b+1};q)_m}{(q,q^{-N},q^{m+b-N};q)_m(q^{2m+b-N+1}q)_{N-2m}}.
\end{align}   
\end{prop}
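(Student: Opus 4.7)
The plan is to deduce both biorthogonality relations \eqref{1storth}--\eqref{2ndorth} from the algebraic identities established in Section \ref{sec:eigenbases}, using the representation-theoretic interpretation of the rational functions via the overlaps $U_m(n)=\braket{e_m \mid d_n^*}$ and $\tilde U_m(n)=\braket{e_m^* \mid Z \mid d_n}$ given in \eqref{Ucu} and \eqref{UTcv}. The two biorthogonality relations will be the shadows of the orthogonality $\braket{e_m^* \mid e_{m'}}=\delta_{mm'}$ and of $\braket{d_{n'}^* \mid Z \mid d_n}=-\delta_{nn'}$ once a resolution of the identity is inserted.

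More precisely, inserting $I=-\sum_n Z\ket{d_n}\bra{d_n^*}$ from \eqref{eq:idd} into $\braket{e_m^* \mid e_{m'}}=\delta_{mm'}$ would yield the overlap identity
\begin{equation*}
\sum_{n=0}^N \tilde U_m(n)\,U_{m'}(n) = -\delta_{mm'},
\end{equation*}
while inserting $I=\sum_m \ket{e_m}\bra{e_m^*}$ from \eqref{eq:idef} to the left of $Z$ in $\braket{d_{n'}^* \mid Z \mid d_n}=-\delta_{nn'}$ (from \eqref{eq:orthd}) would yield
\begin{equation*}
\sum_{m=0}^N \tilde U_m(n)\,U_m(n') = -\delta_{nn'}.
\end{equation*}

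To pass from these two algebraic identities to \eqref{1storth}--\eqref{2ndorth}, I would substitute the explicit forms \eqref{Ucu} and \eqref{UTcv}. Setting $P_k:=a_0 a_1 \cdots a_{k-1}$ with $P_0=1$, the normalization-constant factors combine into $\tfrac{P_n}{P_m}\cdot\tfrac{P_m}{P_n}=1$ in the first identity (with $m=m'$), and into $\tfrac{P_{n'}}{P_m}\cdot\tfrac{P_m}{P_n}=\tfrac{P_{n'}}{P_n}$ in the second (independent of $m$, and equal to $1$ on the diagonal $n=n'$ enforced by the right-hand side). The minus signs in the two identities are absorbed by the sign in front of $\cV_m$ in \eqref{UTcv}. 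What remains is a matching of the surviving $q$-Pochhammer and power-of-$q$ prefactors with the claimed weights $\cW(n)$, $\cW^*(m)$ and the normalizations $h_m$, $h_n^*$.

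The main obstacle is the $q$-Pochhammer bookkeeping in this last step: one must split the combined prefactor of each sum into a pure $n$-dependent (respectively $m$-dependent) weight and a pure $m$-dependent (respectively $n$-dependent) normalization, matching exactly the presentation given in the proposition. This is achieved using standard identities such as \eqref{identity1}--\eqref{identity2} together with splittings of the form $(a;q)_N=(a;q)_m(aq^m;q)_{N-m}$ to separate the $m$- and $n$-dependent factors. No new idea is required beyond the algebraic core displayed above; the step is mechanical but delicate, and is the only place where the derivation departs from the pure representation theory of $\mqh$.
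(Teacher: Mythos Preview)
Your proposal is correct and follows essentially the same route as the paper: the paper derives the two overlap identities $\sum_n(-1)\tilde U_m(n)U_{m'}(n)=\delta_{mm'}$ and $\sum_m(-1)\tilde U_m(n)U_m(n')=\delta_{nn'}$ from the orthogonality relations \eqref{eq:orthef}--\eqref{eq:orthd} together with the resolutions of the identity \eqref{eq:idef}--\eqref{eq:idd}, exactly as you describe, and then substitutes \eqref{Ucu}--\eqref{UTcv} and reduces the prefactors with $q$-Pochhammer identities (the paper cites \eqref{identity1}, \eqref{simpleid2} and a couple of ad hoc ratio identities, and records the global multiplicative rescalings by $(q;q)_N/(q^{-b};q)_N$ and $(q^{2-a+b-N};q)_N/(q;q)_N$ used to normalize $h_0=h_0^*=1$). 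Your handling of the $a_i$ via $P_k=a_0\cdots a_{k-1}$ is the same cancellation the paper leaves implicit; note only that in the first sum the combined factor is $P_m/P_{m'}$ (independent of the summation index $n$), which indeed equals $1$ on the diagonal and is irrelevant off it.
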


\begin{proof}
From the orthogonality relations in \eqref{eq:orthef} and \eqref{eq:orthd}, and the resolutions of the identity in \eqref{eq:idef} and \eqref{eq:idd}, one can show that the overlap functions $U_m(n)$ and $\Tilde{U}_m(n)$ obey the two following biorthogonality relations:
 \begin{align}
     &\sum_{n=0}^N (-1)\Tilde{U}_m(n) U_{m'}(n) = \delta _{mm'}, \quad m,m'=0,1,\dots,N, \label{1stbiorth}\\
    &\sum_{m=0}^N (-1)\Tilde{U}_m(n) U_m(n')  =  \delta _{nn'}, \quad n,n'=0,1,\dots,N. \label{2ndbiorth}
 \end{align}
Replacing the functions $U_m(n)$ and $\tilde U_m(n)$ in \eqref{1stbiorth} and \eqref{2ndbiorth} with their expressions \eqref{Ucu} and \eqref{UTcv}, one gets two biorthogonality relations for the functions $\cU_m(n;a,b,N;q)$ and $\cV_m(n;a,b,N;q)$ which, with some manipulations, lead to the results \eqref{1storth} and \eqref{2ndorth}. Let us indicate that in order to arrive at the presentation \eqref{1storth} from \eqref{1stbiorth}, both sides of the equation have been multiplied by $(q;q)_N/(q^{-b};q)_N$, and the identities \eqref{identity1} and
\begin{equation}
    \frac{(q;q)_N}{(q^{m+1};q)_{N-m}}=(q;q)_m \label{simpleid2}
\end{equation}
have been used. Similarly, to arrive at \eqref{2ndorth} from \eqref{2ndbiorth}, both sides of the equation have been multiplied by $(q^{2-a+b-N};q)_N/(q;q)_N$ and the identities \eqref{identity1}, \eqref{simpleid2} as well as 
\begin{equation}
    \frac{(q^{2-a+b-N};q)_N}{(q^{2-a+b-N+n};q)_{N-n}}=(q^{2-a+b-N};q)_n
\end{equation}
and
\begin{equation}
    \frac{(q;q)_{N-n}}{(q;q)_{N}}=\frac{(-1)^n q^{\frac{n}{2}(n-2N-1)}}{(q^{-N};q)_{n}}
\end{equation}
have been used. With these presentations, the weight functions $\cW(n)$ and $\cW^*(m)$ and the normalizations $h_m$ and $h_n^\star$ are such that $h_0=h_0^*=1$.
\end{proof}

\begin{rem}
Relation \eqref{1storth} corresponds to the biorthogonality relation provided in \cite{BGVZ}, with the change $q\to q^{-1}$. The weight function $\cW(n)$ and normalization $h_{m}$ are found naturally in the present framework using the representation theory of the meta $q$-Hahn algebra, while with the methods used in \cite{BGVZ}, they required hindsight and comparison with the limit of a biorthogonality relation provided by Wilson \cite{Wil}.    
\end{rem}

\subsection{Bispectrality of $\cU_m(n)$}
The rational functions $\cU_m(n)$ satisfy generalized bispectral properties. These are recovered next with the help of the actions of the generators of $\mqh$ on EVP and GEVP bases.

\subsubsection{Recurrence relation}
\begin{prop}
The rational functions of $q$-Hahn type $\cU_m(n; a, b, N;q)$ defined in \eqref{cU} obey the following recurrence relation (for $m,n=0,1,\dots,N$):
\begin{align}
& \Q{n-m-a}\cA_m (\cU_{m+1}(n)-\cU_{m}(n)) +\Q{n+m-a+b-N}\cC_m (\cU_{m-1}(n)-\cU_{m}(n))\nonumber\\
&\qquad =\Q{a}\Q{2m+b-N} \,\cU_{m}(n),
\label{cU:RRalt}
\end{align}
where
\begin{align}
   &\cA_m=q^{a+m}\frac{\Q{-m-b-1}\Q{m+b-N}}{\Q{N-2m-b-1}}, \label{cA}\\  
   &\cC_m=q^{a}\frac{\Q{m}\Q{m-N-1}}{\Q{2m+b-N-1}}. \label{cC}
\end{align}
\end{prop}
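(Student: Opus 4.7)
The strategy is to exploit the identification in \eqref{Ucu}, which expresses $\cU_m(n)$ as the overlap $U_m(n)=\braket{e_m\,|\,d_n^*}$ up to a scalar factor that splits into an $m$-dependent and an $n$-dependent piece; a three-term recurrence in $m$ for $U_m(n)$ thus converts directly into one for $\cU_m(n)$. The recurrence for $U_m(n)$ is obtained from the GEVP satisfied by $\ket{d_n^*}$ combined with the tridiagonal action of $X$ and $Z$ on the $e$ basis computed in Subsection~\ref{ssec:repe}.

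To begin, I would transpose the GEVP $X^\top\ket{d_n^*}=\lambda_n Z^\top\ket{d_n^*}$; since the scalar product on $\V$ is real, this yields $\bra{d_n^*}X=\lambda_n\bra{d_n^*}Z$ and hence
\begin{equation*}
\bra{d_n^*}(X-\lambda_n Z)\ket{e_m}=0 \qquad \text{for all } m,n\in\{0,1,\dots,N\}.
\end{equation*}
Letting $X$ and $Z$ act on $\ket{e_m}$ via the tridiagonal formulas \eqref{eq:actZe}--\eqref{eq:actXe} and using $\lambda_n=\Q{\alpha-n}$ from \eqref{eq:lambda} produces the three-term recurrence
\begin{equation*}
\bigl(X^{(e)}_{m+1,m}-\lambda_n Z^{(e)}_{m+1,m}\bigr)U_{m+1}(n)+\bigl(X^{(e)}_{m,m}-\lambda_n Z^{(e)}_{m,m}\bigr)U_m(n)+\bigl(X^{(e)}_{m-1,m}-\lambda_n Z^{(e)}_{m-1,m}\bigr)U_{m-1}(n)=0.
\end{equation*}
The relations $X^{(e)}_{m+1,m}=\Q{\beta-m}a_m$, $Z^{(e)}_{m+1,m}=a_m$ and $X^{(e)}_{m-1,m}=\Q{m-\beta-1}Z^{(e)}_{m-1,m}$, together with the elementary identity $\Q{x}-\Q{y}=-q^y\Q{x-y}$ and the parameter dictionary $a=\alpha-\beta$, $b=N-2\beta-1$, recast the two off-diagonal coefficients, up to a common factor $-q^{a+\beta-n}$, as $\Q{n-m-a}\,a_m$ and $\Q{n+m-a+b-N}\,Z^{(e)}_{m-1,m}$. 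This already produces the $q$-integer prefactors $\Q{n-m-a}$ and $\Q{n+m-a+b-N}$ of the proposition.

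Finally, I would translate this recurrence into one for $\cU_m(n)$ by dividing through by the common factor $q^{a+\beta-n}$ and by the $m$-dependent prefactor relating $\cU_m(n)$ to $U_m(n)$ in \eqref{Ucu}. The resulting ratios are independent of $n$ and collapse, using $1-q^x=(1-q)\Q{x}$, to the quotients of $q$-integers building $\cA_m$ and $\cC_m$ in \eqref{cA}--\eqref{cC}, up to a common $m$-dependent factor $\Q{2m+b-N}^{-1}$ that is cleared by multiplying the recurrence through by $\Q{2m+b-N}$. The main obstacle is the subsequent verification that the resulting diagonal coefficient simplifies to $-\Q{n-m-a}\cA_m-\Q{n+m-a+b-N}\cC_m-\Q{a}\Q{2m+b-N}$; this is a routine but lengthy manipulation of the explicit matrix entries \eqref{action:Zone:coe2} and \eqref{action:Xone:coe2}. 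A convenient shortcut, once the two off-diagonal coefficients have been matched, is to observe that both the derived and the claimed diagonal coefficients are affine functions of $q^n$ (the derived one through $\Q{\alpha-n}Z^{(e)}_{m,m}$ after clearing the overall $q^{a+\beta-n}$, and the claimed one through $\Q{n-m-a}$ and $\Q{n+m-a+b-N}$), so that it suffices to compare their values at two easily handled choices of $n$, such as $n=0$ and $n=1$.
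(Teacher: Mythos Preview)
Your proposal is correct and follows essentially the same route as the paper: pair the GEVP for $\ket{d_n^*}$ with the tridiagonal actions \eqref{eq:actZe}--\eqref{eq:actXe} of $X$ and $Z$ on the $e$ basis to obtain a three-term recurrence for $U_m(n)$, then convert to $\cU_m(n)$ via \eqref{Ucu} and simplify. One minor slip: the identity you quote should read $\Q{x}-\Q{y}=q^{y}\Q{x-y}$ (no minus sign), so the common factor you extract is $+q^{a+\beta-n}$ rather than $-q^{a+\beta-n}$; this does not affect the argument, and your affine-in-$q^n$ shortcut for matching the diagonal term is a perfectly valid alternative to the brute-force simplification the paper alludes to.
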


\begin{proof}
Since the vectors $\ket{d_n^\star}$ are solutions to the GEVP between $X^\top$ and $Z^\top$, one has 
\begin{align}
    (\braket{e_m \mid X) \mid d_n^*} &=\braket{e_m \mid X^{\top}\mid d_n^*} \nonumber\\
    &=\braket{e_m \mid \lambda_n Z^{\top} \mid d_n^*} \nonumber\\
    &=\lambda_n (\braket{e_m \mid Z) \mid d_n^*}, \label{eq:recUmn1}
\end{align}   
where the generalized eigenvalues $\lambda_n = \Q{\alpha-n}$ were given in \eqref{eq:lambda}.
With the tridiagonal actions \eqref{eq:actZe} and \eqref{eq:actXe} of $Z$ and $X$ on the $e$ basis, equation \eqref{eq:recUmn1} leads to a recurrence relation for the overlaps $U_m(n)=\braket{e_m \mid d_n^\star}$:
\begin{align}
&X^{(e)}_{m+1,m}U_{m+1}(n) +X^{(e)}_{m,m} U_{m}(n)+X^{(e)}_{m-1,m} U_{m-1}(n)
\nonumber\\
&=\lambda_n \left(Z^{(e)}_{m+1,m}U_{m+1}(n) +Z^{(e)}_{m,m} U_{m}(n)+Z^{(e)}_{m-1,m} U_{m-1}(n)\right). \label{recU}
\end{align}
The terms in the previous equation can be rearranged as
\begin{align}
&(X^{(e)}_{m+1,m}-\lambda_n Z^{(e)}_{m+1,m}) U_{m+1}(n)+(X^{(e)}_{m-1,m}-\lambda_n Z^{(e)}_{m-1,m}) U_{m-1}(n)
\nonumber\\
&=(\lambda_n Z^{(e)}_{m,m}-X^{(e)}_{m,m}) U_{m}(n). \label{eq:recUmn2}
\end{align}
Now, the following can be substituted in \eqref{eq:recUmn2}:  the matrix elements \eqref{action:Zone:coe1}, \eqref{action:Zone:coe3}, \eqref{action:Xone:coe1} and \eqref{action:Xone:coe3}, and the expression \eqref{Ucu} of $U_m(n)$ in terms of the functions $\cU_m(n)$. Bringing in the parameters \eqref{parab}, and after some rearrangement, equation \eqref{eq:recUmn2} then gives
\begin{align}
&\Q{n-m-a} \cA_m (\cU_{m+1}(n)-\cU_{m}(n))+\Q{n+m-a+b-N}\cC_m (\cU_{m-1}(n)-\cU_{m}(n)) \nonumber \\
&=\Big(q^{\frac{1}{2}(2n-2a+b-N+1)}\Q{2m+b-N}(\lambda_n Z^{(e)}_{m,m}-X^{(e)}_{m,m}) \nonumber \\
& \qquad \qquad \qquad-\Q{n-m-a} \cA_m-\Q{n+m-a+b-N}\cC_m \Big) \cU_{m}(n),
\end{align}
where the recurrence coefficients $\cA_m$ and $\cC_m$ are as in \eqref{cA} and \eqref{cC}. 
Finally, using the matrix elements \eqref{action:Xone:coe2} and \eqref{action:Zone:coe2} and rewriting them in terms of the parameters \eqref{parab}, one finds \eqref{cU:RRalt}.
\end{proof}

\subsubsection{Difference equation}
\begin{prop} \label{diifcU}
The rational functions of $q$-Hahn type $\cU_m(n; a, b, N;q)$ defined in \eqref{cU} obey the following difference equation (for $m,n=0,1,\dots,N$):
\begin{align}
 &   \cB_n \,\cU_{m}(n+1)
 - (\cB_n+\cD_{n}) \,\cU_{m}(n)
    + \cD_{n} \,\cU_{m}(n-1)\nonumber\\
& \qquad = \Q{m} \Q{N-m-b} \Big(\Q{a-n} \,\cU_{m}(n)-q^{a}\Q{-n}\,\cU_{m}(n-1)\Big), \label{diffeqcU}
\end{align}
where
\begin{align}
  &\cB_n=q^{n-b}\Q{a-n}\Q{a-n-1}\Q{N-n},\label{diffeqcU_coecB}\\
   &\cD_{n}=q^{n}\Q{-n}\Q{a-n}\Q{N-n+a-b}. \label{diffeqcU_coecD}
\end{align}
\end{prop}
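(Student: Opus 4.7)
The proof follows the general philosophy of the recurrence proof: extract a three-term identity in $n$ for the overlap $U_m(n) = \braket{e_m\mid d_n^*}$, then convert it to a relation on $\cU_m(n)$ via \eqref{Ucu}. The main difference is that, to get shifts in the GEVP index $n$, one uses the action of suitable elements of $\mqh$ on the $d^*$-basis rather than on the $e$-basis.

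A useful starting observation is that $Z^\top+I$ acts as a pure lowering operator on the standard basis, so that, combined with the GEVP $X^\top\ket{d_n^*}=\lambda_n Z^\top\ket{d_n^*}$, one obtains the closed-form bidiagonal actions
\begin{equation*}
Z^\top\ket{d_n^*} = -\ket{d_n^*} + a_{n-1}\ket{d_{n-1}^*}, \qquad X^\top\ket{d_n^*} = -\lambda_n\ket{d_n^*} + \lambda_n a_{n-1}\ket{d_{n-1}^*}.
\end{equation*}
The action of $V$ on the $d^*$-basis is only lower triangular in general, but a suitable combination of $V$ with $X^\top$ and $Z^\top$ produces a three-term action on $\ket{d_n^*}$, and a companion combination yields a genuine two-term action; the relevant formulas are recorded in Appendix \ref{sec:appendix}. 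One then identifies two operators $\hat O_1, \hat O_2 \in \mqh$ so that $\bra{e_m}\hat O_1\ket{d_n^*}$ expands as a three-term combination in $U_m(n\pm 1), U_m(n)$ and $\bra{e_m}\hat O_2\ket{d_n^*}$ as a two-term combination in $U_m(n), U_m(n-1)$, and such that the scalar identity
\begin{equation*}
\bra{e_m}\hat O_1\ket{d_n^*} = \Q{m}\Q{N-m-b}\,\bra{e_m}\hat O_2\ket{d_n^*}
\end{equation*}
holds. This in turn requires $\ket{e_m}$ to be a GEVP eigenvector of the pencil $(\hat O_1,\hat O_2)$ with eigenvalue $\Q{m}\Q{N-m-b}$, which is ensured by the affine identity $\Q{m}\Q{N-m-b}=q^\beta\,\nu_m+\Q{\beta}\Q{\beta+1}$ (obtained from $b=N-2\beta-1$) relating the sought eigenvalue to the known eigenvalue $\nu_m$ of $V$ on $\ket{e_m}$, see Subsection~\ref{ssec:repe}.

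Substituting $U_m(n)=C_{m,n}\cU_m(n)$ from \eqref{Ucu} in the resulting scalar identity, dividing by the $m$-dependent prefactor $C_{m,n}$, and simplifying the $n$-dependent ratios $C_{m,n\pm 1}/C_{m,n}$ by means of $q$-Pochhammer identities such as \eqref{identity1} yields the stated difference equation \eqref{diffeqcU} with coefficients $\cB_n$ and $\cD_n$ as in \eqref{diffeqcU_coecB} and \eqref{diffeqcU_coecD}. The main technical difficulty lies in the explicit identification of $\hat O_1,\hat O_2$ and the careful bookkeeping of $q$-hypergeometric prefactors; once the three-term and two-term actions on the $d^*$-basis are in hand, the remaining manipulations are routine and mirror those carried out for the recurrence relation.
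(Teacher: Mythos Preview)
Your outline is essentially the paper's own strategy, but you leave unexecuted the one step you yourself flag as ``the main technical difficulty'': the explicit identification of $\hat O_1$ and $\hat O_2$. In the paper the choice is simply $\hat O_1 = V^\top Z^\top$ and $\hat O_2 = Z^\top$. Then the GEVP you posit for $\ket{e_m}$ is not something to be arranged \emph{a posteriori}: it is immediate, since $\bra{e_m}V^\top = \nu_m\bra{e_m}$ gives
\[
\braket{e_m\mid V^\top Z^\top\mid d_n^*} \;=\; \nu_m\,\braket{e_m\mid Z^\top\mid d_n^*},
\]
i.e.\ the pencil eigenvalue is $\nu_m$ itself. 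The affine identity you wrote, $\Q{m}\Q{N-m-b}=q^\beta\nu_m+\Q{\beta}\Q{\beta+1}$, is then used exactly as in the paper to convert $\nu_m$ into the factor $\Q{m}\Q{N-m-b}$ appearing in \eqref{diffeqcU}. The operator $V^\top Z^\top$ acts tridiagonally on the $d^*$-basis by \eqref{action:VZTond} (this is what you gestured at with ``a suitable combination of $V$ with $X^\top$ and $Z^\top$''---note it is a product, not a linear combination, and $X^\top$ is not needed), while $Z^\top$ acts bidiagonally by \eqref{ZTond*}; the rest is the bookkeeping you describe. So your proposal is correct and on the same track; it just stops short of naming the two operators, which is the content of the proof.
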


\begin{proof}
Recalling that $\ket{e_n}$ are eigenvectors for $V$ with eigenvalues $\nu_n$ given by \eqref{mu}, one can write
\begin{align}
\braket{e_m \mid V^{\top}Z^{\top}\mid d_n^*} &=(\braket{e_m \mid V) Z^{\top} \mid d_n^*} \nonumber\\
&=\nu_m\braket{e_m \mid Z^{\top}\mid d_n^*}. \label{eq:difUmn1}
\end{align}
The action of the operator $V^\top Z^\top$ on the basis $d^*$ is tridiagonal, as given in the formula \eqref{action:VZTond}, and the action of $Z^\top$ is bidiagonal, as given in the formula \eqref{ZTond*}. Using these in \eqref{eq:difUmn1} leads to the following difference equation for the overlaps $U_m(n)=\braket{e_m \mid d_n^*}$: 
\begin{align} &(V^{\top}Z^{\top})_{n+1,n}^{(d*)} U_{m}(n+1)+(V^{\top}Z^{\top})_{n,n}^{(d*)}U_{m}(n)+(V^{\top}Z^{\top})_{n-1,n}^{(d*)}U_m(n-1)\nonumber\\
 & =\nu_m (-U_m(n)+a_{n-1} U_m(n-1)) \label{eq:difUmn2}.
\end{align}
The expression \eqref{Ucu} for $U_m(n)$ in terms of $\cU_m(n)$ can be substituted in equation \eqref{eq:difUmn2}, which becomes
\begin{align} &(V^{\top}Z^{\top})_{n+1,n}^{(d*)} \frac{a_n q^{-a}(1-q^{a-n-1})}{(1-q^{-n-1})} \cU_{m}(n+1)\nonumber \\
&+(V^{\top}Z^{\top})_{n,n}^{(d*)}\cU_{m}(n)+(V^{\top}Z^{\top})_{n-1,n}^{(d*)}\frac{q^a(1-q^{-n})}{a_{n-1}(1-q^{a-n})}\cU_m(n-1)\nonumber\\
& =\nu_m (-\cU_m(n)+\frac{q^a(1-q^{-n})}{(1-q^{a-n})}\cU_m(n-1)). \label{eq:difUmn3}
\end{align}
Now, one can substitute in \eqref{eq:difUmn3} the matrix elements \eqref{VZtop1}--\eqref{VZtop3} expressed in terms of the parameters $a,b$ given by \eqref{parab}, and multiply both sides of the equation by $-\frac{q^{\frac{1}{2}(1+b-N)}}{\Q{a-n}}$ to obtain:
\begin{align} 
&\cB_n \cU_{m}(n+1)
-\left(\cB_n+ \cD_n+\Q{a-n} \Q{(N-b-1)/2}\Q{(N-b+1)/2} \right) \cU_{m}(n) \nonumber \\
&\quad+\left(\cD_n+q^a \Q{-n} \Q{(N-b-1)/2}\Q{(N-b+1)/2}\right) \cU_m(n-1)\nonumber\\
&\qquad =q^{\frac{1}{2}(N-b-1)}\nu_m(\Q{a-n}\cU_m(n)-q^a \Q{-n} \cU_m(n-1)), \label{eq:difUmn4}
\end{align}
where the coefficients $\cB_n$ and $\cD_n$ are those given in \eqref{diffeqcU_coecB} and \eqref{diffeqcU_coecD}. 
Finally, using the eigenvalue \eqref{mu} in \eqref{eq:difUmn4} and observing that
\begin{equation}
    q^{\frac{1}{2}(N-b-1)}\Q{m-(N-b-1)/2}\Q{(N-b-1)/2-m+1}+\Q{(N-b-1)/2}\Q{(N-b+1)/2}\\
    =\Q{m}\Q{N-b-m},
\end{equation}
one obtains the difference equation given in \eqref{diffeqcU}.
\end{proof}

\begin{rem}
The difference equation \eqref{diffeqcU} for $\cU_m(n)$ corresponds to the one provided in \cite{BGVZ} under the inversion $q\to q^{-1}$. Concerning the recurrence relation \eqref{cU:RRalt}, we have noticed some errors in the equation (7.3) provided in \cite{BGVZ} (in the coefficients $\mu_n^{(2)}$ and $\mu_n^{(8)}$), but otherwise, there is also a correspondence under the same inversion for the parameter $q$.   
\end{rem}

\subsection{Bispectrality of $\cV_m(n)$}
The rational functions $\cV_m(n)$ also satisfy generalized bispectral properties. This is apparent from equation \eqref{cV}, which relates $\cV_m(n;a,b,N;q)$ and $\cU_m(n;a,b,N;q)$ by the transformations
\begin{equation}
n\to N-n, \qquad a \to b-a+2, \qquad q \to q^{-1}. \label{eq:transfvarspar}  
\end{equation}
Applying these transformations to the recurrence relation and difference equation obtained in the previous subsection for the functions $\cU_m(n)$ yields directly the bispectral properties of the functions $\cV_m(n)$. However, since both biorthogonal partners are equally important, we present in this subsection an independent algebraic derivation of the bispectrality of $\cV_m(n)$.

\subsubsection{Recurrence relation}
\begin{prop}
The rational functions of $q$-Hahn type $\cV_m(n; a, b, N;q)$ defined in \eqref{cV} obey the following recurrence relation (for $m,n=0,1,\dots,N$):
\begin{align}
& \Q{n-N+m+b-a+2}q^{2N-4m-2b}\cA_m (\cV_{m+1}(n)-\cV_{m}(n)) \nonumber\\
&+\Q{n-m-a+2}\cC_m \big(\cV_{m-1}(n)-\cV_{m}(n)\big)=-q^2\Q{a-b-2}\Q{N-2m-b} \,\cV_{m}(n),
\label{cV:RR}
\end{align}
with $\cA_m$ and $\cC_m$ given by \eqref{cA} and \eqref{cC}.
\end{prop}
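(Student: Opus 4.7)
The plan is to mirror the derivation of \eqref{cU:RRalt}, but now using the GEVP on the ket side, $X\ket{d_n}=\lambda_n Z\ket{d_n}$, since the overlap $\tilde U_m(n)=\braket{e_m^* \mid Z \mid d_n}$ (proportional to $\cV_m(n)$ by \eqref{UTcv}) features $\ket{d_n}$ rather than $\ket{d_n^*}$. The new complication is the factor of $Z$ between the bra and the ket: a naive sandwich of $(X-\lambda_n Z)\ket{d_n}=0$ with $\bra{e_m^*}$ produces only a three-term recurrence for $\braket{e_m^* \mid d_n}$, not for $\tilde U_m(n)$ itself.

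To circumvent this, the first step is to multiply the GEVP on the left by $Z$ to get $(ZX-\lambda_n Z^2)\ket{d_n}=0$, then substitute the defining relation \eqref{eq:mqhrel1} rewritten as $ZX=qXZ+Z^2+Z-(1-q)X$. Using the GEVP once more to eliminate the leftover $X\ket{d_n}$ term and sandwiching with $\bra{e_m^*}$, one arrives at the identity
\begin{equation*}
q\braket{e_m^* \mid XZ \mid d_n}+(1-\lambda_n)\braket{e_m^* \mid Z^2 \mid d_n}=-q^{\alpha-n}\tilde U_m(n),
\end{equation*}
where the simplification $1-(1-q)\lambda_n=q^{\alpha-n}$ has been used on the right. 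Inserting the resolution of the identity $\sum_k \ket{e_k}\bra{e_k^*}=I$ between the two operators on the left and invoking the tridiagonal actions from Subsection \ref{ssec:repe} yields $\braket{e_m^* \mid XZ \mid d_n}=\sum_{k}X^{(e)}_{m,k}\tilde U_k(n)$ and $\braket{e_m^* \mid Z^2 \mid d_n}=\sum_{k}Z^{(e)}_{m,k}\tilde U_k(n)$, with sums restricted to $k\in\{m-1,m,m+1\}$. The resulting three-term recurrence for $\tilde U_m(n)$ in $m$ depends on $n$ only through $\lambda_n$.

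The recurrence is then converted to the claimed form \eqref{cV:RR} via \eqref{UTcv}. Two observations drive the simplification. First, the off-diagonal coefficients factor cleanly: from $X^{(e)}_{m,m+1}=[m-\beta]_q Z^{(e)}_{m,m+1}$ (cf.\ \eqref{action:Xone:coe3}), one computes $qX^{(e)}_{m,m+1}+(1-\lambda_n)Z^{(e)}_{m,m+1}=q^{\alpha-n}\Q{n+m+1-\alpha-\beta}Z^{(e)}_{m,m+1}$, which under \eqref{parab} contains exactly the factor $\Q{n-N+m+b-a+2}$; the analogous computation for the $(m,m-1)$ coefficient produces $\Q{n-m-a+2}$. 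Second, the $n$-dependent portion of the normalization $N_m(n)$ relating $\tilde U_m$ to $\cV_m$ is $m$-independent, so $N_{m\pm 1}(n)/N_m(n)$ is itself free of $n$, and a straightforward $q$-Pochhammer computation yields $Z^{(e)}_{m,m+1}N_{m+1}(n)/N_m(n)=-q^{-1}\Q{N-2m-b}^{-1}q^{2N-4m-2b}\cA_m$ together with $a_{m-1}N_{m-1}(n)/N_m(n)=-q^{-1}\Q{N-2m-b}^{-1}\cC_m$. Multiplying the recurrence by $-q\Q{N-2m-b}$ makes the coefficients of $\cV_{m\pm 1}$ match exactly those of \eqref{cV:RR}.

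The main obstacle is the final verification that the diagonal coefficient reduces to $-\Q{n-N+m+b-a+2}q^{2N-4m-2b}\cA_m-\Q{n-m-a+2}\cC_m+q^2\Q{a-b-2}\Q{N-2m-b}$. This amounts to simplifying $-q\Q{N-2m-b}\bigl(q^{n-\alpha+1}X^{(e)}_{m,m}+q^{n-\alpha}(1-\lambda_n)Z^{(e)}_{m,m}+1\bigr)$ after substituting the explicit expressions \eqref{action:Zone:coe2} and \eqref{action:Xone:coe2}. Because both sides are linear in $q^{\alpha-n}$, the verification can be split into matching the $n$-independent part and the $q^{\alpha-n}$-coefficient separately; each reduces to a $q$-algebraic identity provable by repeated application of $[x]_q+q^x[y]_q=[x+y]_q$ together with \eqref{parab}.
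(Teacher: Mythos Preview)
Your proof is correct and follows essentially the same route as the paper's. The paper also multiplies the GEVP by $Z$, invokes relation \eqref{eq:mqhrel1} to pass from $ZX$ to $qXZ+Z^2+Z-(1-q)X$, and then feeds the tridiagonal actions \eqref{eq:actZe}--\eqref{eq:actXe} (in their transposed form on the $e^*$ basis) into the resulting identity to obtain the three-term recurrence \eqref{eq:recUtmn2}; your insertion of $\sum_k\ket{e_k}\bra{e_k^*}$ is just another way of saying ``act with $X^\top,Z^\top$ on $\bra{e_m^*}$'', and the recurrences you and the paper obtain for $\tilde U_m(n)$ are identical after rearrangement. The paper then simply says ``similarly to the computations performed in the previous subsection'' for the passage from $\tilde U_m$ to $\cV_m$, whereas you spell out the factorizations of the off-diagonal coefficients and the $n$-independence of $N_{m\pm1}(n)/N_m(n)$, which is a helpful level of detail.
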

\begin{proof}
The functions $\cV_m(n)$ appear within the overlaps $\tilde U_m(n) = \braket{e_m^* \mid Z\mid d_n}$. Let us define $\ket{\tilde d_n}:= Z \ket{d_n}$, so that $\tilde U_m(n) = \braket{e_m^*\mid \tilde d_n}$. Recalling the GEVP $X\ket{d_n}=\lambda_nZ\ket{d_n}$ and using the first relation \eqref{eq:mqhrel1} of the meta $q$-Hahn algebra, one finds
\begin{align}
   \lambda_n \braket{e_m^* \mid Z \mid \tilde d_n}&=\lambda_n \braket{e_m^* \mid Z^2 \mid d_n} \nonumber\\
   &=\braket{e_m^* \mid ZX \mid d_n} \nonumber\\
   &=\braket{e_m^* \mid (Z^2+Z-(1-q)X+qXZ \mid d_n} \nonumber\\
   &=\braket{e_m^* \mid (Z+1-(1-q)\lambda_n+qX \mid \tilde d_n}. \label{eq:recUtmn1}
\end{align}
With the tridiagonal actions of the transposed operators $Z^\top$ and $X^\top$ on the $e^*$ basis, as deduced from \eqref{eq:actZe} and \eqref{eq:actXe}, equation \eqref{eq:recUtmn1} leads to the following recurrence relation for the overlaps $\tilde U_m(n)$:
\begin{align}
  &(q X_{m,m+1}^{(e)}+Z_{m,m+1}^{(e)}  )\tilde U_{m+1}(n) 
  +(q X_{m,m}^{(e)}+Z_{m,m}^{(e)}+1)  \tilde U_{m}(n) +(q X_{m,m-1}^{(e)} +Z_{m,m-1}^{(e)} ) \tilde U_{m-1}(n) 
\nonumber\\
&=\lambda_n\left(
Z_{m,m+1}^{(e)}  \tilde U_{m+1}(n)
+(Z_{m,m}^{(e)}+1-q)  \tilde U_{m}(n)
+Z_{m,m-1}^{(e)}  \tilde U_{m-1}(n)
\right). \label{eq:recUtmn2}
\end{align}
Then, similarly to the computations performed in the previous subsection, equation \eqref{cV:RR} can be obtained from \eqref{eq:recUtmn2} by writing the overlaps $\tilde U_{m}(n)$ in terms of the functions $\cV_m(n)$ according to \eqref{UTcv}, substituting in the matrix elements \eqref{action:Zone:coe1}--\eqref{action:Zone:coe3} and \eqref{action:Xone:coe1}--\eqref{action:Xone:coe3} as well as the generalized eigenvalue \eqref{eq:lambda}, and using the parameters \eqref{parab}.
\end{proof}
It can be verified that the recurrence relation \eqref{cV:RR} satisfied by $\cV_m(n)$ indeed coincides with the recurrence relation \eqref{cU:RRalt} satisfied by $\cU_m(n)$ after applying the transformations \eqref{eq:transfvarspar}.

\subsubsection{Difference equation}
\begin{prop} \label{diifcV}
The rational functions of $q$-Hahn type $\cV_m(n; a, b, N)$ defined in \eqref{cV} obey the following difference equation (for $m,n=0,1,\dots,N$):  
\begin{align}
 &  \tilde\cB_{n,m} \Big(\cV_m(n+1;a,b,N) - \cV_m(n;a,b,N)\Big)
  + \tilde\cD_{n} \Big(\cV_m(n-1;a,b,N) - \cV_m(n;a,b,N)\Big)
  \nonumber \\
 & \qquad = \Q{-m}\Q{m+b-N}\Q{a-b-2}\cV_m(n;a,b,N),
 \label{diffeqcV}
\end{align}
where
\begin{align}
  &\tilde\cB_{n,m} = q^{n-N}\Q{N-n}\Q{N-n-m+a-b-2}\Q{m-n+a-2},\\
  &\tilde\cD_{n} = -q^{b-N}\Q{n}\Q{N-n+a-b-1}\Q{N-n+a-b-2}.
\end{align}
\end{prop}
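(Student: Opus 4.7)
The strategy parallels the derivation of Proposition \ref{diifcU}, with the basis pair $(e, d^*)$ exchanged for $(e^*, d)$, taking into account that $\tilde U_m(n) = \braket{e_m^* \mid Z \mid d_n}$ carries an inner factor of $Z$. I would consider the matrix element $\braket{e_m^* \mid VZ \mid d_n}$ and compute it in two ways. From $V^\top \ket{e_m^*} = \nu_m \ket{e_m^*}$, i.e.\ $\bra{e_m^*} V = \nu_m \bra{e_m^*}$, acting with $V$ on the bra immediately gives
\[
\braket{e_m^* \mid VZ \mid d_n} \;=\; \nu_m\, \tilde U_m(n).
\]
On the other hand, acting with $VZ$ to the right on $\ket{d_n}$ should, in analogy with the tridiagonal action \eqref{action:VZTond} of $V^\top Z^\top$ on $d^*$ used in the proof of Proposition \ref{diifcU}, produce an expansion
\[
VZ\ket{d_n} \;=\; M_{n+1,n}\, Z\ket{d_{n+1}} \,+\, M_{n,n}\, Z\ket{d_n} \,+\, M_{n-1,n}\, Z\ket{d_{n-1}}
\]
in the family $\{Z\ket{d_k}\}_{k=0}^N$ with explicit coefficients $M_{k,n}$. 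Pairing this with $\bra{e_m^*}$ then yields the three-term relation
\[
\nu_m\, \tilde U_m(n) \;=\; M_{n+1,n}\, \tilde U_m(n+1) \,+\, M_{n,n}\, \tilde U_m(n) \,+\, M_{n-1,n}\, \tilde U_m(n-1).
\]

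To convert this into the claimed equation for $\cV_m(n;a,b,N;q)$, I would substitute the expression \eqref{UTcv} for $\tilde U_m(n)$, which produces ratios of $q$-Pochhammer symbols combining with the entries $M_{k,n}$, and then trade $\alpha, \beta$ for $a, b$ via \eqref{parab}. Rearranging so that the eigenvalue $\nu_m = \Q{n-\beta}\Q{\beta-n+1}$ combines with the diagonal entry $M_{n,n}$ should produce the right-hand side $\Q{-m}\Q{m+b-N}\Q{a-b-2}\cV_m(n)$ of \eqref{diffeqcV}, while the off-diagonal contributions furnish the coefficients $\tilde\cB_{n,m}$ and $\tilde\cD_n$ multiplying the increments $\cV_m(n\pm 1) - \cV_m(n)$.

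The main technical obstacle is establishing the tridiagonal action of $VZ$ on $\{Z\ket{d_k}\}$: a naive application of the commutation relation \eqref{eq:mqhrel3} to $VZ\ket{d_n}$ also produces terms involving $V\ket{d_n}$ and $\ket{d_n}$ that are not manifestly in the $\tilde d$ span, so one must either verify directly, using the bidiagonal action \eqref{eq:actVn} of $V$ together with the explicit expansion \eqref{Zdn} of $Z\ket{d_n}$, that the resulting vector lies in the three-dimensional span of $Z\ket{d_{n-1}}, Z\ket{d_n}, Z\ket{d_{n+1}}$ (reading off the $M_{k,n}$ by matching standard-basis coordinates), or invoke an appendix-style identity analogous to \eqref{action:VZTond}. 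A much quicker non-independent alternative is to apply the transformations \eqref{eq:transfvarspar} to the already-proven difference equation \eqref{diffeqcU} for $\cU_m$, but the subsection explicitly calls for an independent algebraic derivation from $\mqh$, which is what the plan above realizes.
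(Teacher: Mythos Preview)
Your plan has the right spirit but contains a genuine gap at exactly the point you flag as the ``main technical obstacle'': the expansion
\[
VZ\ket{d_n} \;=\; M_{n+1,n}\, Z\ket{d_{n+1}} \,+\, M_{n,n}\, Z\ket{d_n} \,+\, M_{n-1,n}\, Z\ket{d_{n-1}}
\]
simply does not hold. The appendix formula \eqref{action:VZond} says $VZ\ket{d_n}$ is tridiagonal in the $\{\ket{d_k}\}$ basis, not in $\{Z\ket{d_k}\}$. Since $Z$ acts lower bidiagonally on $\{\ket{d_k}\}$ (with $-1$ on the diagonal, see \eqref{action:Zond}), the two three-dimensional subspaces $\mathrm{span}\{\ket{d_{n-1}},\ket{d_n},\ket{d_{n+1}}\}$ and $\mathrm{span}\{Z\ket{d_{n-1}},Z\ket{d_n},Z\ket{d_{n+1}}\}$ are distinct (the latter involves $\ket{d_{n+2}}$). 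Forcing $VZ\ket{d_n}$ into the second span would require $M_{n+1,n}=0$ together with a compatibility relation among the three entries $(VZ)^{(d)}_{k,n}$ that, upon inserting \eqref{VZtop1}--\eqref{VZcoe}, fails because the diagonal entry depends on $\beta$ while the combination it would have to equal does not. Consequently your scheme produces overlaps $\braket{e_m^*\mid d_k}$ rather than $\tilde U_m(k)=\braket{e_m^*\mid Z\mid d_k}$, and no three-term relation in $n$ results.

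The paper circumvents this by inserting one more factor of $Z$: it starts from $\nu_m\braket{e_m^*\mid Z\mid \tilde d_n}=\braket{e_m^*\mid VZ^2\mid d_n}$ and then uses the defining relations \eqref{eq:mqhrel1} and \eqref{eq:mqhrel3} of $\mqh$ (together with the GEVP $X\ket{d_n}=\lambda_nZ\ket{d_n}$ and the EVP for $\bra{e_m^*}$) to rewrite the operator as $Z\big(qVZ+(\text{scalars and terms linear in }Z)\big)$ acting on $\ket{d_n}$. Now the inner bracket acts tridiagonally in $\{\ket{d_k}\}$ by \eqref{action:VZond} and \eqref{action:Zond}, so the outer $Z$ converts this into an expression tridiagonal in $\{Z\ket{d_k}\}=\{\ket{\tilde d_k}\}$, yielding a genuine three-term relation in $\tilde U_m(n)$. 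The price is that the left-hand side becomes $\nu_m\big(-\tilde U_m(n)+a_n\tilde U_m(n+1)\big)$ rather than just $\nu_m\tilde U_m(n)$; this two-term eigenvalue side is precisely what produces the $m$-dependence in $\tilde\cB_{n,m}$ that you would otherwise have no mechanism to generate.
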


\begin{proof}
Using the EVP $V^\top \ket{e_m^*} = \nu_m\ket{e_m^*}$, the GEVP $X\ket{d_n}=\lambda_nZ\ket{d_n}$ as well as the first and third relations \eqref{eq:mqhrel1}, \eqref{eq:mqhrel3} of the meta $q$-Hahn algebra, 
one finds
\begin{align}
    \nu_m \braket{e_m^* \mid Z \mid \tilde d_n} &=
    \braket{e_m^* \mid VZ^2 \mid  d_n} \nonumber\\
    &=\braket{e_m^* \mid (qZV+(1+q)X-(1-q)V+\eta)Z \mid  d_n} \nonumber\\
    &=\braket{e_m^* \mid Z(qVZ+\eta)+(1+q)XZ-(1-q)VZ \mid  d_n} \nonumber\\
    &=\braket{e_m^* \mid Z(qVZ+\eta)+(1+q^{-1})(ZX-Z^2-Z+(1-q)X)-(1-q)VZ \mid  d_n}\nonumber\\
    &=\braket{e_m^* \mid Z\left(qVZ+(1+q^{-1})((\lambda_n-1)Z+\lambda_n(1-q)-1)+\eta-(1-q)\nu_m\right)\mid  d_n}. \label{eq:difVmn1}
\end{align}
From the bidiagonal action \eqref{action:Zond} of $Z$ on $\ket{d_n}$, one can replace on the LHS of \eqref{eq:difVmn1}
\begin{equation}
Z \ket{\tilde d_n} =Z^2 \ket{d_n}= Z(-\ket{d_n} +a_n \ket{d_{n+1}})  = -\ket{\tilde d_n } + a_n\ket{\tilde d_{n+1}}. \label{eq:difVmn2}
\end{equation} 
Using the tridiagonal action \eqref{action:VZond} of $VZ$ on $\ket{d_n}$ in addition to \eqref{action:Zond} on the RHS of \eqref{eq:difVmn1}, and recalling that $\tilde U_{m}(n)=\braket{e_m^* \mid \tilde d_n}$, one arrives at the following difference equation for the overlaps:
\begin{align}
\nu_m\left( -q \tilde U_{m}(n)
+a_n \tilde U_{m}(n+1)
\right) &=
\left(q(VZ)^{(d)}_{n+1,n}+a_n(1+q^{-1})(\lambda_n - 1)\right)\tilde U_{m}(n+1) \nonumber \\
&\quad + \left(q(VZ)^{(d)}_{n,n} - (1+q)\lambda_n+ \eta \right) \tilde U_{m}(n) + q(VZ)^{(d)}_{n-1,n} \tilde U_{m}(n-1). \label{eq:difVmn3}
\end{align}
The relation \eqref{eq:eta3} for the parameter $\eta$ of the algebra $\mqh$ can be used in \eqref{eq:difVmn3}. Substituting the values \eqref{eq:lambda} and \eqref{mu} for $\lambda_n$ and $\nu_n$, the expression \eqref{UTcv} for $\Tilde{U}_m(n)$ in terms of $\cV_m(n)$ and the formulas \eqref{VZtop1}--\eqref{VZcoe} for the matrix elements of $VZ$, and writing everything in terms of the parameters $a,b$ given in \eqref{parab}, one obtains the difference equation \eqref{diffeqcV}.
\end{proof}
Again, let us stress that the difference equation \eqref{diffeqcV} of $\cV_m(n)$ can be recovered from the difference equation \eqref{diffeqcU} of $\cU_m(n)$ by applying the transformations \eqref{eq:transfvarspar}. In fact, by observing that
\begin{equation}
\tilde\cB_{n,m} = \tilde\cB_{n,0}+q^{a-2-N} \Q{m}\Q{N-n}\Q{N-b-m}
\end{equation}
with
\begin{equation}
\tilde\cB_{n,0} = q^{n-N}\Q{N-n}\Q{N-n+a-b-2}\Q{a-n-2},
\end{equation}
one can rewrite the difference equation \eqref{diffeqcV} in the form 
\begin{align}
 & \tilde\cB_{n,0} \,\cV_{m}(n+1)
 - (\tilde\cB_{n,0}+\tilde\cD_{n}) \,\cV_{m}(n)
    + \tilde\cD_{n} \,\cV_{m}(n-1)\nonumber\\
& \qquad = q^{-N}\Q{m} \Q{N-m-b} \left(q^b\Q{a-b-n+1} \,\cV_{m}(n)-q^{a-2}\Q{N-n}\,\cV_{m}(n+1)\right), \label{diffeqcV2}
\end{align}
which makes the similarity with \eqref{diffeqcU} more apparent.

\subsection{Contiguity relations}
We finally provide some contiguity relations for the functions $\cU_m(n;a,b,N;q)$, again using the representation theory of $\mqh$. These contiguity relations involve a shift in the parameter $a$. 
\begin{prop}  
    The rational functions of $q$-Hahn type $\cU_m(n; a, b,N;q)$ defined in \eqref{cU} satisfy the following contiguity relations (for $m,n=0,1,\dots,N$):
 \begin{align}
    &\Q{a} \, \cU_m(n;a+1)  =\Q{a-n} \, \cU_m(n;a)-q^a \Q{-n} \, \cU_m(n-1;a), \label{cont1}
\end{align}
and
\begin{align}
 &-\frac{\Q{a}\, \Q{2m+b-N}}{\Q{a-n}}\,\cU_m(n;a+1) \nonumber\\
&= \cA_m \, \cU_{m+1}(n;a)-(\cA_m+\cC_m+(1-q^a)\Q{2m+b-N})\,\cU_m(n;a)+\cC_m \,\cU_{m-1}(n;a), \label{cont2}   
\end{align}
where $\cU_m(n; a):=\cU_m(n; a, b,N;q)$ for simplicity and the coefficients $\cA_m$ and $\cC_m$ are the same as in \eqref{cA} and \eqref{cC}.
\end{prop}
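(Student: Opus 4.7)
The plan is to establish both contiguity relations by combining direct $q$-hypergeometric manipulations with the algebraic framework of the meta $q$-Hahn algebra, treating the two relations separately.

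For \eqref{cont1}, the quickest route is direct coefficient matching: inserting the ${}_{3}\phi_{2}$ series \eqref{cU} on both sides and comparing coefficients term by term in the summation variable $k$, the identity reduces to the elementary $q$-number relation $\Q{a} = \Q{a-n+k} - q^a\Q{k-n}$, which is an instance of $\Q{p+r} = \Q{p} + q^p\Q{r}$ (with $p=a-n+k$, $r=n-k$). A more structural (and algebraic) derivation proceeds via the identification $U_m(n;\alpha) = \braket{e_m|d_n^*}$: translating \eqref{cont1} through \eqref{Ucu} yields the vector-level identity $U_m(n;\alpha+1) = q^{m-n}(U_m(n;\alpha) - a_{n-1}U_m(n-1;\alpha))$, which follows by computing the ratio of $q$-Pochhammer factors in the expansions \eqref{eq:dn*} to obtain the component-level relation $A(n,\ell,\alpha+1) = q^{\ell-n}[A(n,\ell,\alpha) - a_{n-1}A(n-1,\ell,\alpha)]$ (where $A(n,\ell,\alpha) = \braket{\ell|d_n^*(\alpha)}$), together with the shift $\braket{\ell|e_m(\alpha+1)} = q^{m-\ell}\braket{\ell|e_m(\alpha)}$ that follows directly from \eqref{eq:en}.

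For \eqref{cont2}, the plan is to reduce it to a mixed $m$-$n$ contiguity. Substituting \eqref{cont1} into the LHS of \eqref{cont2} to eliminate $\cU_m(n;a+1)$ transforms the target into
\begin{equation*}
\cA_m\cU_{m+1}(n;a) - \bigl(\cA_m+\cC_m - q^a\Q{2m+b-N}\bigr)\cU_m(n;a) + \cC_m\cU_{m-1}(n;a) = \frac{q^a\Q{-n}\Q{2m+b-N}}{\Q{a-n}}\cU_m(n-1;a).
\end{equation*}
This mixed relation can be established by considering the overlap $\braket{e_m|X - \Q{\alpha}Z | d_n^*}$ and applying, on the $d^*$ side, the GEVP $X^\top\ket{d_n^*}=\Q{\alpha-n}Z^\top\ket{d_n^*}$ (the difference producing a prefactor $-q^{\alpha-n}\Q{n}$ in front of $Z^\top\ket{d_n^*}$, which then connects to $\ket{d_{n-1}^*}$ via the bidiagonal action of $Z^\top$ on the $d^*$-basis deducible from \eqref{Zdn}), and, on the $e$ side, expanding $X-\Q{\alpha}Z$ via the tridiagonal actions of Subsection \ref{ssec:repe} to obtain the $\cU_{m\pm1}$ combination with coefficients built from $\cA_m$ and $\cC_m$.

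The main obstacle is the coefficient bookkeeping for the second relation. While \eqref{cont1} reduces cleanly to an elementary $q$-number identity, \eqref{cont2} involves a non-trivial interplay between the tridiagonal coefficients in the $e$-basis and the shift structure in the $d^*$-basis; the delicate point is verifying that the prefactor of $\cU_m(n-1;a)$ is exactly $q^a\Q{-n}\Q{2m+b-N}/\Q{a-n}$, which requires tracking several cancellations across the two sides. Should the algebraic route become unwieldy, a fallback is direct ${}_{3}\phi_{2}$ term-by-term verification, which, while tedious, is systematic and parallels the coefficient-matching approach that succeeds for \eqref{cont1}.
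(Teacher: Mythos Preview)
Your proposal is correct, but the route you take for \eqref{cont2} is more roundabout than necessary, and the paper's argument is worth knowing because it unifies both contiguity relations through a single identity.

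For \eqref{cont1}, your direct term-by-term verification is a perfectly valid alternative: after clearing the common factors, the identity at level $k$ is indeed $\Q{a}=\Q{a-n+k}-q^{a}\Q{k-n}$. Your ``structural'' route is in fact the paper's method in slightly different clothing: the paper observes, by direct computation in the standard basis, that
\[
Z^{\top}\ket{d_n^*}\ \text{(at parameter $\alpha$)}\ =\ -\,\ket{d_n^*}\big|_{\alpha\to\alpha+1}\ \text{(componentwise in the $\ell$-expansion)},
\]
and combines this with the $q^{m-\ell}$ shift in $\bra{e_m}$ to get the key identity
\[
U_m(n)\big|_{\alpha\to\alpha+1}\;=\;-\,q^{m-n}\,\braket{e_m\!\mid Z^{\top}\!\mid d_n^*}.
\]
Evaluating the right-hand side via the \emph{bidiagonal} action $Z^{\top}\ket{d_n^*}=-\ket{d_n^*}+a_{n-1}\ket{d_{n-1}^*}$ gives exactly your vector-level relation $U_m(n;\alpha+1)=q^{m-n}(U_m(n;\alpha)-a_{n-1}U_m(n-1;\alpha))$ and hence \eqref{cont1}.

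For \eqref{cont2}, the paper does \emph{not} substitute \eqref{cont1} back in nor pass through the pencil $X-\Q{\alpha}Z$. It simply reuses the same key identity above, but now evaluates $\braket{e_m\!\mid Z\!\mid d_n^*}$ by letting $Z$ act \emph{tridiagonally on the $e$ side} (equations \eqref{action:Zone}--\eqref{action:Zone:coe3}). This immediately yields
\[
q^{n-m}\,U_m(n)\big|_{\alpha\to\alpha+1}\;=\;-Z^{(e)}_{m+1,m}U_{m+1}(n)-Z^{(e)}_{m,m}U_m(n)-Z^{(e)}_{m-1,m}U_{m-1}(n),
\]
and converting to $\cU_m$ gives \eqref{cont2} directly. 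So one operator ($Z$), one overlap, two evaluations (on $d^*$ versus on $e$) produce the two contiguity relations, with no need to reduce to a mixed $m$--$n$ identity or to introduce $X$. Your approach via $\braket{e_m\!\mid X-\Q{\alpha}Z\!\mid d_n^*}$ is valid---on the $d^*$ side the GEVP gives $(X^\top-\Q{\alpha}Z^\top)\ket{d_n^*}=-q^{\alpha-n}\Q{n}\,Z^\top\ket{d_n^*}$, and the bookkeeping does close up---but it effectively folds the already-proved recurrence relation \eqref{recU} (which is what brings in $X$) into the computation, whereas the paper's argument isolates the role of $Z$ alone. The paper's version therefore explains more transparently why the same $\cA_m,\cC_m$ reappear: they come straight from the off-diagonal entries $Z^{(e)}_{m\pm1,m}$, with no admixture from $X^{(e)}$.
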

\begin{proof}
From the expression \eqref{eq:dn*} for $\ket{d_n^*}$ and the action \eqref{eq:actZTn} of $Z^\top$ on the standard basis, one can compute
\begin{align}
    Z^{\top}\ket{d_n^*} = -\sum_{\ell=0}^{N} \dfrac{a_0 a_{1}\dots a_{n-1}}{a_{0} a_{1}\dots a_{\ell-1}} \frac{q^{(\beta-\alpha)n}}{q^{(\beta-\alpha)\ell}} \dfrac{(q^{-n};  q)_{\ell}}{(q^{-n};  q)_{n}} \dfrac{(q^{-n+(\alpha+1)-\beta};  q)_{n}}{(q^{-n+(\alpha+1)-\beta};  q)_{\ell}} \ket{\ell}. \label{eq:Ztdsn}
\end{align}
Then, using the expression \eqref{eq:en} for the vectors $\ket{e_n}$, the result \eqref{eq:Ztdsn} and the orthonormality of the standard basis, it is straightforward to show 
\begin{align}
 U_{m}(n)\Big|_{\alpha\to\alpha+1} 
 =- q^{m-n} \braket{e_m \mid Z^{\top}\mid d_n^*}. \label{eq:cont1}
\end{align}
Now, acting with $Z^\top$ on the vector $\ket{d_n^*}$ in \eqref{eq:cont1} according to the bidiagonal action \eqref{ZTond*}, one obtains 
\begin{align}
   q^{n-m} \, U_{m}(n)\Big|_{\alpha\to\alpha+1}=U_{m}(n)- a_{n-1} U_{m}(n-1). \label{CR:U}
\end{align}
Equation \eqref{CR:U} leads to the contiguity relation \eqref{cont1} after rewriting the overlaps $U_m(n)$ in terms of the functions $\cU_m(n)$ with the help of \eqref{Ucu}. By acting instead with $Z$ on the vector $\ket{e_m}$ in \eqref{eq:cont1} according to the tridiagonal action \eqref{eq:actZe}, recalling that $\braket{e_m \mid Z^{\top}\mid d_n^*}=(\braket{e_m \mid Z)\mid d_n^*}$, one finds 
\begin{align}
 &q^{n-m} \, U_{m}(n)\Big|_{\alpha\to\alpha+1} = -Z^{(e)}_{m+1,m}U_{m+1}(n)-Z^{(e)}_{m,m}U_{m}(n)-Z^{(e)}_{m-1,m}U_{m-1}(n). \label{CR2:U} 
\end{align}
Substituting the matrix elements \eqref{action:Zone:coe1}--\eqref{action:Zone:coe3} for $Z$ in \eqref{CR2:U} and rewriting again the overlaps $U_m(n)$ in terms of $\cU_m(n)$, one arrives at the contiguity relation \eqref{cont2}.
\end{proof}

\section{Conclusion} \label{sec:conclusion}

The program of extending the Askey scheme to biorthogonal rational functions through the introduction of meta algebras and their representation, initiated in \cite{TVZ24} with the Hahn case, was pursued in the present paper by dealing with the $q$-Hahn case. Following the general approach of \cite{TVZ24}, the meta $q$-Hahn algebra with three generators $X,V,Z$ was defined abstractly, and a representation on a finite-dimensional vector space where all three generators act in a bidiagonal fashion was obtained. This bidiagonal representation allowed to explicitly solve generalized and ordinary eigenvalue problems involving the operators $X,V,Z$ or their transpositions. It also proved quite straightforward to compute the actions of operators on the (generalized) eigenbases using the explicit expressions of the vectors. In particular, it was found that the operators $V$ and $W=X-\Q{\mu}Z$ form a Leonard pair \cite{Ter01}, in accordance with the fact that when viewed as abstract algebraic elements, $V$ and $W$ generate the familiar $q$-Hahn algebra. Generally speaking, this is the reason why the representation theory of meta algebras leads to a unified interpretation of both polynomials and rational functions, as the meta algebras contain those of Askey--Wilson type \cite{Zhe91}. In the present case, the $q$-Hahn polynomials were identified within overlaps between eigenbases involving the operators $V$ and $W$, while the rational functions of $q$-Hahn type were identified within overlaps between (generalized) eigenbases involving $V$, $X$ and $Z$. Then, working with the properties of the (generalized) eigenbases and the actions of the operators on these bases, the (bi)orthogonality and bispectral properties of the $q$-Hahn functions, as well as some contiguity relations, were obtained with rather simple computational efforts. 

The results in this paper agree, as they should, with those on biorthogonal rational functions of $q$-Hahn type found in \cite{BGVZ} and obtained in a different way by working with a $q$-difference realization of the meta $q$-Hahn algebra; and they also agree with the classical results on $q$-Hahn and dual $q$-Hahn polynomials \cite{Koekoek}. Moreover, the formulas in the Hahn case \cite{TVZ24} can be recovered by taking the limit $q\to 1$ of those in the $q$-Hahn case presented here, as expected on general grounds.

The next step in this meta algebra program is to study with the same approach the functions of Racah type (${}_4F_3$ hypergeometric series) and $q$-Racah type (${}_4\phi_3$ basic hypergeometric series), which are at the top of the terminating branch of the Askey scheme. In these cases, however, the corresponding meta algebras remain to be identified. Other directions for future research have been discussed at the end of the paper \cite{TVZ24} and can be mentioned here as well: studying infinite-dimensional representations of meta algebras in order to include infinite families of rational functions and polynomials in the picture, or exploring multivariate generalizations of these. Finally, concerning the elliptic BRFs mentioned in the introduction, an open problem is the relation of meta algebras of Askey--Wilson type with elliptic Sklyanin algebras. 

\section*{Acknowledgments}
PAB holds an Alexander-Graham-Bell scholarship from the Natural Sciences and Engineering Research Council of Canada (NSERC). AB, STe and SP each benefited from a CRM-ISM undergraduate summer scholarship. The research of STs is supported by JSPS KAKENHI (Grant Number 24K00528). The research of LV is supported by a Discovery Grant from the NSERC. MZ was funded in part by an Alexander-Graham-Bell scholarship from NSERC and a doctoral scholarship from Fonds de Recherche du Québec - Nature et Technologies (FRQNT), and is currently supported by a postdoctoral fellowship from NSERC and by Perimeter Institute for Theoretical Physics. Research at Perimeter Institute is supported in part by the Government of Canada through the Department of Innovation, Science and Economic Development and by the Province of Ontario through the Ministry of Colleges and Universities.  

\appendix
\section{Actions of operators on several bases} \label{sec:appendix}

This appendix provides some useful formulas for the actions of operators representing elements of $\mqh$ on bases solving GEVPs or EVPs, in addition to the actions already provided in Section \ref{sec:actionbases} (let us recall here that $a_{-1}=a_{N}=0$):
{\allowdisplaybreaks
\begin{align}
&Z \ket{d_n} = -\ket{d_n} + a_n \ket{d_{n+1}},
\label{action:Zond}\\
&X \ket{d_n} = \lambda_n Z \ket{d_n} = -\lambda_n \ket{d_n} + \lambda_n a_n \ket{d_{n+1}}, \label{Xond}\\
&V \ket{d_n} = \sum_{j=\max(0,n-1)}^N V_{j,n}^{(d)}\ket{d_j}, \label{Vond}\\
&Z^{\top} \ket{d_n^*} = -\ket{d_n^*} + a_{n-1} \ket{d_{n-1}^*}, \label{ZTond*}\\
&X^{\top} \ket{d_n^*} = \lambda_n Z^{\top} \ket{ d_n^*}=- \lambda_n \ket{ d_n^* }+  \lambda_n a_{n-1} \ket{d_{n-1}^*}, \label{XTond*} \\
&V^{\top} \ket{d_n^*} = \sum_{j=0}^{\max(n+1,N)} V_{j,n}^{\top(d*)}\ket{d_j^*}, \label{VTond*}\\
&V^{\top}Z^{\top} \ket{d_n^*} = (V^{\top}Z^{\top})_{n+1,n}^{(d*)} \ket{d_{n+1}^*} + (V^{\top}Z^{\top})_{n,n}^{(d*)} \ket{d_{n}^*} + (V^{\top}Z^{\top})_{n-1,n}^{(d*)} \ket{d_{n-1}^*}, \label{action:VZTond}\\
&VZ\ket{d_n} 
=(VZ)_{n+1,n}^{(d)} \ket{d_{n+1}}
+(VZ)_{n,n}^{(d)} \ket{d_{n}}
+(VZ)_{n-1,n}^{(d)} \ket{d_{n-1}},
\label{action:VZond}\\
&Z\ket{f_n} =-\ket{f_n} +\sum_{j=n+1}^N \frac{(-1)^{j+n+1}a_{n}a_{n+1}\cdots a_{j-1}}{q^{\frac{1}{2}(j-n-1)(2\alpha-2\mu-n-j)}}\ket{f_{j}}, \\
&X\ket{f_n} =-\lambda_n \ket{f_n} +[\mu]_q\sum_{j=n+1}^N \frac{(-1)^{j+n+1} a_{n}a_{n+1}\cdots a_{j-1}}{q^{\frac{1}{2}(j-n-1)(2\alpha-2\mu-n-j)}}\ket{f_{j}},
\end{align}
}%
where
{\allowdisplaybreaks
\begin{align}
& V_{n-1,n}^{(d)}=q^{\alpha-n+1}\frac{[n]_q[n-N-1]_q}{a_{n-1}},\\
& V_{n,n}^{(d)}=q^{n-\alpha}\left([{\alpha-\beta}]_q [{\alpha+\beta-N+1}]_q - [\alpha-n+1]_q[\alpha-n]_q\right),\\
& V_{j,n}^{(d)}= q^{j-\alpha}[{\alpha-\beta}]_q [{\alpha+\beta-N+1}]_q a_n a_{n+1}\cdots a_{j-1}\quad (j>n),\\
& V_{n+1,n}^{\top(d*)}=q^{\alpha-n}\frac{[n+1]_q[n-N]_q}{a_{n}},\\
& V_{n,n}^{\top(d*)}=q^{n-\alpha+1}\left([{\alpha-\beta-1}]_q [{\alpha+\beta-N}]_q - [\alpha-n]_q[\alpha-n-1]_q\right),\\
& V_{j,n}^{\top(d*)}= q^{j-\alpha+1}[{\alpha-\beta-1}]_q [{\alpha+\beta-N}]_q a_{j} a_{j+1}\cdots a_{n-1}\quad (j<n),\\ 
&(V^{\top}Z^{\top})_{n+1,n}^{(d*)}=q^{\alpha-N} \frac{\Q{n+1}\Q{N-n}}{a_n}, \label{VZtop1}\\
& (V^{\top}Z^{\top})_{n,n}^{(d*)}=
q^{n-\beta}[\beta]_q[\beta-N+1]_q 
-[n]_q[\alpha-n+1]_q
-q[n-N]_q[\alpha-n-1]_q, \label{VZtop2}\\
& (V^{\top}Z^{\top})_{n-1,n}^{(d*)}=a_{n - 1} q \Q{\alpha - n}\Q{n - \alpha  - 1}, \label{VZtop3}\\
&
(VZ)^{(d)}_{n+1,n} = (V^{\top}Z^{\top})_{n,n+1}^{(d*)},\quad
(VZ)^{(d)}_{n,n} = (V^{\top}Z^{\top})_{n,n}^{(d*)},\quad
(VZ)^{(d)}_{n-1,n} = (V^{\top}Z^{\top})_{n,n-1}^{(d*)}. \label{VZcoe}
\end{align}
}%
Let us recall that the actions of $Z,X,V$ on the vectors $\ket{e_n}$ are given in Subsection \ref{ssec:repe}, and the action of $V$ on the vectors $\ket{f_n}$ is given in \eqref{eq:actVf}. Moreover, actions of the transposed operators on the bases $e^*$ and $f^*$ are deduced using \eqref{eq:transposedmatrixentries}.

\end{document}